\numberwithin{equation}{section}
\definecolor{MyBlue}{cmyk}{1,0.13,0,0.63}
\definecolor{MyGreen}{cmyk}{0.91,0,0.88,0.52}
\definecolor{MyRed}{rgb}{.6,0,0}
\newcommand{\mylinkcolor}{MyBlue}
\newcommand{\mycitecolor}{MyGreen}
\newcommand{\myurlcolor}{MyRed}
\def\@endtheorem{\endtrivlist}
\theoremstyle{plain}
\newtheorem{thm}{Theorem}[section]
\newtheorem*{main*}{Main Theorem}
\newtheorem{lemma}[thm]{Lemma}
\newtheorem{prop}[thm]{Proposition}
\newtheorem{coro}[thm]{Corollary}
\theoremstyle{definition}
\newtheorem{df}[thm]{Definition}
\newtheorem{remark}[thm]{Remark}
\newtheorem{example}[thm]{Example}
\newtheorem{assumption}[thm]{Assumption}
\def\thm@space@setup{%
  \thm@preskip=4pt plus 2pt minus 2pt
  \thm@postskip=\thm@preskip
}
\renewenvironment{proof}[1][\proofname]{\par
  \pushQED{\qed}%
  \normalfont \topsep4\p@\relax 
  \trivlist
  \item[\hskip\labelsep
        \itshape
    #1\@addpunct{.}]\ignorespaces
}{%
  \popQED\endtrivlist\@endpefalse
}
\setlist{topsep=4pt plus 2pt minus 2pt,partopsep=0pt,itemsep=2pt plus 2pt minus 2pt,parsep=0.5\parskip}
\newcommand{\MR}[1]{}
\let\OLDthebibliography\thebibliography
\renewcommand\thebibliography[1]{
  \addcontentsline{toc}{section}{\refname}
  \OLDthebibliography{#1}
  \setlength{\parskip}{0pt}
  \setlength{\itemsep}{0pt plus 0.3ex}
}
\def\<{\left\langle}
\def\>{\right\rangle}
\def\Rp{{\mathbb{R}_{\geq0}}}
\def\C{{\mathbb{C}}}
\def\R{{\mathbb{R}}}
\def\N{{\mathbb{N}}}
\def\mat#1#2#3#4{{\begin{pmatrix} #1&#2\\ #3&#4 \end{pmatrix}}}
\def\famop{\{A(t)\}_{t\in [0,T]}}
\def\LH{L^2([0,T],\mH)} 
\def\ddt{\frac{d}{dt}}
\def\pdt{\frac{\partial}{\partial t}}
\def\pds{\frac{\partial}{\partial s}}
\newcommand{\mH}{\mathcal{H}}
\newcommand{\mW}{\mathcal{W}}
\newcommand{\D}{\mathcal{D}}
\newcommand{\Id}{\mathrm{Id}}
\newcommand{\into}{\hookrightarrow}
\renewcommand{\hat}{\widehat}
\renewcommand{\tilde}{\widetilde}
\DeclareMathOperator{\Dom}{Dom}
\DeclareMathOperator{\Ran}{Ran}
\DeclareMathOperator{\Ker}{Ker}
\DeclareMathOperator{\Coker}{Coker}
\DeclareMathOperator{\APS}{APS}
\DeclareMathOperator{\ind}{ind}
\DeclareMathOperator{\sfl}{sf}
\DeclareMathOperator{\spann}{span}
\DeclareMathOperator{\Dim}{Dim}
\DeclareMathOperator{\ev}{ev}
\DeclareMathOperator{\Real}{Re}
\DeclareMathOperator{\spec}{spec}
\DeclareMathOperator{\supp}{supp}
\newcommand{\sD}{\slashed{D}}
\def\MyTitle{The APS-index and the spectral flow}
\title{\MyTitle}
\author{
Koen van den Dungen and Lennart Ronge\footnote{The authors would like to thank the Hausdorff Center for Mathematics and the Bonn International Graduate School of Mathematics (BIGS-M) for their support.}
\\[2mm]
{\small Mathematisches Institut}, 
{\small Universit\"at Bonn}\\
{\small Endenicher Allee 60, D-53115 Bonn}\\
}
\begin{document}

\maketitle

\begin{abstract}
\noindent
We study the Atiyah-Patodi-Singer (APS) index, and its equality to the spectral flow, in an abstract, functional analytic setting. More precisely, we consider a (suitably continuous or differentiable) family of self-adjoint Fredholm operators $A(t)$ on a Hilbert space, parametrised by $t$ in a finite interval. We then consider two different operators, namely $D := \frac{d}{dt}+A$ (the abstract analogue of a Riemannian Dirac operator) and $D := \frac{d}{dt}-iA$ (the abstract analogue of a Lorentzian Dirac operator). The latter case is inspired by a recent index theorem by Bär and Strohmaier (Amer.\ J.\ Math. 141 (2019), 1421--1455) for a Lorentzian Dirac operator equipped with APS boundary conditions. In both cases, we prove that Fredholm index of the operator $D$ equipped with APS boundary conditions  is equal to the spectral flow of the family $A(t)$. 

\vspace{\baselineskip}
\noindent
\emph{Keywords}: 
Atiyah-Patodi-Singer index; 
Fredholm pairs of projections; 
spectral flow. 

\noindent
\emph{Mathematics Subject Classification 2010}: 
47A53, 
58J20, 
58J30. 
\end{abstract}

\tableofcontents

\section{Introduction}

In a recent paper \cite{BS19}, Bär and Strohmaier derived a Lorentzian version of the Atiyah-Patodi-Singer (APS) index theorem for globally hyperbolic spacetimes with future and past spacelike boundaries. 
The main purpose of this article is to recast their work in a more abstract, functional analytic setting. For the sake of completeness and comparison, we will also discuss the ordinary Riemannian version of the Atiyah-Patodi-Singer index theorem in this abstract setting. 

Consider an even-dimensional, oriented, time-oriented Lorentzian spin manifold $(X,g)$. We will assume that $(X,g)$ is \emph{globally hyperbolic}, which implies \cite[Theorem 1.1]{BS05} that it is isometric to $(\R\times\Sigma,-N^2dt^2+g_t)$, where the \emph{Cauchy hypersurface} $\Sigma$ is a smooth manifold with a family of Riemannian metrics $\{g_t\}_{t\in\R}$, and the \emph{lapse function} $N$ is a smooth function $\R\times\Sigma\to(0,\infty)$. 
Furthermore, as in \cite{BS19} we assume that the Cauchy hypersurface $\Sigma$ is compact (for the noncompact case, see \cite{Bra20}). 

Let $\nu$ be the past-directed unit normal vector field, and let $\beta = \gamma(\nu)$ be Clifford multiplication by $\nu$. 
Since $X$ is even-dimensional, the spinor bundle decomposes into spinors of positive and negative chirality. Identifying the positive and negative chirality spinors using $\beta$, the Lorentzian Dirac operator takes the form \cite [Eq. (11)]{vdD18_FST}
\[
\sD = \mat{0}{-\nabla_\nu + i A(t) - \tfrac n2 H}{-\nabla_\nu - i A(t) - \tfrac n2 H}{0} , 
\]
where $A = \{A(t)\}_{t\in\R}$ is the family of Dirac operators on the Cauchy hypersurfaces $\{t\}\times\Sigma$, and $H$ is the mean curvature of the hypersurfaces $\{t\}\times\Sigma$. 
If $X$ is a metric product (i.e.\ $g = -dt^2 + g_0$, where the metric $g_0$ on $\Sigma$ is independent of $t$), then we have $N\equiv1$, $H\equiv0$, and $A(t)=A_0$, and we obtain 
\[
\sD = \mat{0}{\partial_t + i A_0}{\partial_t - i A_0}{0} . 
\]
In general, if $X$ is not a metric product, we can use parallel transport along the curves $\R\to X$ given by $t\mapsto(t,x)$ for some $x\in \Sigma$, to show that $\sD$ is unitarily equivalent to \cite[Proposition III.5]{vdD18_FST}
\begin{align*}
\sD \simeq \mat{0}{N^{-\frac12} \partial_t N^{-\frac12} + i B(t)}{N^{-\frac12} \partial_t N^{-\frac12} - i B(t)}{0} , 
\end{align*}
where $B(t)$ is obtained from $A(t)$ via the parallel transport isomorphism. 
The bottom left corner of the Dirac operator shall be denoted
\[
D := -\nabla_\nu - i A(t) - \tfrac n2 H \simeq N^{-\frac12} \partial_t N^{-\frac12} - i B(t) . 
\]

We now restrict the globally hyperbolic spacetime $X=\R\times\Sigma$ to a finite time interval $[0,T]$. Thus we consider the globally hyperbolic spacetime $M := [0,T]\times\Sigma$, with past and future spacelike boundaries $\{0\}\times\Sigma$ and $\{T\}\times\Sigma$ (respectively). 
Since these spacelike boundaries are Riemannian manifolds, we can use the Dirac operators $A(0)$ and $A(T)$ to define Atiyah-Patodi-Singer (APS) boundary conditions (i.e., the domain is restricted to those functions $f$ with $f(0)$ in the range of the negative spectral projection of $A(0)$ and $f(T)$ in the range of the positive spectral projection of $A(T)$). Thus equipping $D$ with APS boundary conditions, Bär and Strohmaier \cite{BS19} then prove that the resulting operator $D_{\APS}$ is Fredholm, and that its index can be computed by the same formula as in the original (Riemannian) Atiyah-Patodi-Singer index theorem \cite{APS75}. A crucial step in their proof is to relate this index to the spectral flow $\sfl(A)$ of the family $\{A(t)\}_{t\in[0,T]}$ of Dirac operators on the Cauchy hypersurfaces: 
\begin{align}
\label{eq:ind-sf}
\ind(D_{\APS})=\sfl(A) . 
\end{align}
Suppose now that our globally hyperbolic spacetime $M := [0,T]\times\Sigma$ is of product form near the boundary. Then in particular the lapse function $N$ is equal to $1$ near the boundary, so multiplication by $N^{\frac12}$ preserves the APS boundary conditions. Hence we can consider the new operator 
\[
N^{\frac12} D_{\APS} N^{\frac12} \simeq \partial_t - i N^{\frac12} B(t) N^{\frac12} . 
\]
Thus, writing $\tilde A(t) := N^{\frac12} B(t) N^{\frac12}$, we can summarise the above as follows: we wish to study the Fredholm index of an operator of the form $\partial_t - i \tilde A(t)$ with APS boundary conditions. 
The purpose of this article is to rederive Eq.\ \eqref{eq:ind-sf} for such operators in a more general functional analytic setting:
\begin{itemize}
\item $A=\famop$ is a strongly continuously differentiable family of self-adjoint Fredholm operators on a Hilbert space $\mH$ with constant domain $W$; 
\item $D_{\APS}$ is the closure of the operator 
\[
D := \ddt-iA
\]
on $L^2([0,T],\mH)$, equipped with APS boundary conditions.
\end{itemize}
Furthermore, for the sake of completeness and comparison, we will also discuss the `Riemannian' analogue, namely the operator $\ddt+A$ with APS boundary conditions. 

Let us briefly summarise the contents of this article. 
First, in Section \ref{sec:families}, some basic facts regarding strongly continuously differentiable families of operators will be derived for later use. In Section \ref{sec:sf}, we review the notion of spectral flow, following \cite{Phi96}. 
We prove in Theorem \ref{flowind} that the spectral flow of a norm-continuous family $A=\famop$ is equal to the relative index of the pair $(P_{<0}(0),P_{<0}(T))$ of negative spectral projections of $A$ at the endpoints, provided that $(P_{<0}(0),P_{<0}(t))$ is a Fredholm pair for each $t\in[0,T]$. This generalises a known result \cite[Theorem 3.6]{Les05} in the special case where $P_{<0}(0)-P_{<0}(t)$ is compact (cf.\ Remark \ref{remark:flowind}). 

In Section \ref{Riem}, we describe the abstract analogue of the \emph{Riemannian} APS-index. We note that, on a Riemannian manifold $M=[0,T]\times\Sigma$ with the product metric $g = dt^2 + g_0$, the Dirac operator is of the form 
\[
\sD = \mat{0}{-\partial_t + A_0}{\partial_t + A_0}{0} ,
\]
where $A_0$ denotes the Dirac operator on the hypersurface $\Sigma$. We consider in Section \ref{Riem} the more general setting where $A=\famop$ is a norm-continuous family of self-adjoint operators on a Hilbert space $\mH$ with constant domain $W$, where the inclusion $W\into\mH$ is compact. We then study the operator
\[
D := \ddt + A 
\]
equipped with APS boundary conditions. We can extend $A$ to a family $\tilde A$ on the whole real line. We then recall from \cite{APS76} the classical `index = spectral flow' result:
\begin{align}
\label{eq:ind=sf}
\ind\left(\ddt+\tilde A\right) = \sfl(\tilde A) .
\end{align}
This equality has been rigorously proven by Robbin and Salamon \cite{RS95} for a suitable differentiable family of operators $\tilde A = \{\tilde A(t)\}_{t\in\R}$. 
In fact, the assumption of differentiability is not necessary, and Eq.\ \eqref{eq:ind=sf} remains valid for norm-continuous families (see \cite[Theorem 2.1]{AW11} and \cite[Theorem 5.2]{vdD19_Index_DS}). 
We will prove (see Theorem \ref{thm:Riem_APS_index_sfl}) that the operator $D_{\APS}$ is Fredholm, and that we also have the equality
\[
\ind(D_{\APS}) = \sfl(A) . 
\]
The proof is based on relating the index of $D_{\APS}$ (on the interval $[0,T]$ with APS boundary conditions) to the index of the extension $\ddt+\tilde A$ (on the complete line $\R$). The main issue to overcome is that Eq.\ \eqref{eq:ind=sf} is only valid for families with invertible endpoints, and we show that we may always perturb $A$ to a family with invertible endpoints, without changing its spectral flow or the index of $D_{\APS}$. 

In Section \ref{sec:APS-ind}, we finally describe the abstract analogue of the \emph{Lorentzian} APS-index. In this case, we consider the operator 
\[
D := \ddt - i A 
\]
equipped with APS boundary conditions. 
Here we need to assume in addition that $A$ is \emph{strongly continuously differentiable}.
The additional $-i$ before the family $A$ leads to qualitatively very different behavior of the operator $D$. 
For instance, for the operator $\ddt + A$ on the real line, both the norm of $\ddt f$ and of $Af$ can be estimated by the graph norm $\|f\|_{\ddt+A}$ (cf.\ \cite{RS95}). For the operator $D=\ddt-iA$, however, the equation $Df=0$ has solutions with arbitrarily large $\ddt f$ and $Af$. In fact, the equation has a unique solution for any initial value: indeed, the Cauchy problem corresponding to $D$ is well-posed (see Theorem \ref{thm:Cauchy}). Moreover, solutions to $Df=0$ will not be square-integrable on $\R$, which necessitates restricting to a finite interval $[0,T]$ (and introducing boundary conditions). 

In Section \ref{sec:evol}, we introduce the evolution operator $Q$, which describes solutions to the initial value problem
\begin{align*}
Df&=0, \qquad
f(s)=x.
\end{align*}
The construction of this evolution operator, following \cite[Ch.\ 5]{Pazy83}, requires the assumption that $A$ is \emph{strongly continuously differentiable}. 
We then use the evolution operator in Section \ref{sec:APS-ind_proj} to relate the index of $D_{\APS}$ to the (relative) index of a certain Fredholm pair of spectral projections at the endpoints of the interval, corresponding to the family of `evolved' operators
$$\hat A(t)=Q(0,t)A(t)Q(t,0).$$
We show in section \ref{sec:APS-ind_sf} that $\hat A$ is again strongly continuously differentiable and therefore norm-continuous. In particular, we then know from Theorem \ref{flowind} that the (relative) index of the pair of spectral projections of $\hat A(0)$ and $\hat A(T)$ is equal to the spectral flow of $A$. Thus we combine our results to prove the main theorem:
\begin{main*}
If $(D|_{[0,t]})_{\APS}$ is Fredholm for all $t\in[0,T]$, we have
$$\ind(D_{\APS})=\sfl(A).$$
\end{main*}
Here $D|_{[0,t]}$ is the `restriction' of $D$ to the interval $[0,t]$.
The Lorentzian Dirac operator studied in \cite{BS19} satisfies the hypothesis of our main theorem. 
The general idea and some parts of the proof of our main theorem are similar as in \cite{BS19}, while other parts are different. In particular, the use of Fredholm pairs and the aforementioned spectral projections allows for a much wider generalisation than a straightforward adaptation of the arguments of \cite{BS19} would. 

Finally, Section \ref{sec:counterexample} will describe a counterexample which shows that Fredholmness of $(D|_{[0,t]})_{\APS}$ is not a consequence of the other assumptions.

This article is largely based on the Master's thesis by the second author (\cite{Ron19}), advised by Matthias Lesch and the first author. Several proofs which are only sketched in this article, can be found in more detail in \cite{Ron19}.

The authors would like to thank Matthias Lesch for interesting discussions and for his helpful comments on this manuscript. 

\subsection*{Notation}
Let $\mH$ denote a separable, infinite-dimensional Hilbert space. 
For an operator $T$ on $\mH$ and subspaces $X,Y\subset\mH$ satisfying $X\subset\Dom T$ and $\Ran T\subset Y$, 
we denote by $T|_{X\to Y}$ the restriction of $T$ to $X$ with codomain $Y$. 

Integrals and $L^p$-spaces of Banach-space-valued functions should be understood in the sense of Bochner integration (for details, see e.g.\ \cite[Ch.\ 3]{Hille-Phillips96}).

\section{Families of Operators}
\label{sec:families}

For this whole section, let $X$, $Y$ and $Z$ be Banach spaces, and let $J$ be a compact interval. 
A family of operators $S\colon J\rightarrow B(X,Y)$ is called strongly continuous, if it is continuous with respect to the strong operator topology on $B(X,Y)$. It is called strongly continuously differentiable, if it is differentiable with respect to the strong operator topology and the derivative is strongly continuous.
Explicitly, this means that there exists a strongly continuous family $S'\colon J\to B(X,Y)$ such that for each $x\in X$ we have $\frac{d}{dt} \big( S(t) x \big) = S'(t) x$. 

By the Banach-Steinhaus Theorem (or Uniform Boundedness Principle), strongly continuous families are uniformly bounded. As composition is continuous with respect to the strong topology when restricted to bounded subsets (in the operator norm), the composition of two strongly continuous families is again strongly continuous.

\begin{lemma}
\label{lem:diff}
Let $S\colon J\to B(X,Y)$ be strongly continuously differentiable. Then the following statements hold:
\begin{enumerate}
\item 
\label{diffbound}
$S$ is norm-continuous. 
\item 
\label{idiff}
If $S(t)$ is invertible for all $t\in J$, then the family
\begin{align*}
S^{-1}\colon J&\rightarrow B(Y,X) , \qquad
t\mapsto S(t)^{-1}
\end{align*}
is strongly continuously differentiable with derivative $-S^{-1}S'S^{-1}$.
\end{enumerate}
\end{lemma}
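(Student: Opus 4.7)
For part \eqref{diffbound}, the plan is to bound $\|S(t+h)-S(t)\|$ linearly in $|h|$ via a uniform bound on $S'$. First, since $S'\colon J\to B(X,Y)$ is strongly continuous on a compact interval, for every $x\in X$ the set $\{S'(s)x:s\in J\}$ is compact and hence bounded; by the Banach--Steinhaus theorem this upgrades to a uniform operator-norm bound $M:=\sup_{s\in J}\|S'(s)\|<\infty$. Next, for each fixed $x$, the scalar-parameter curve $s\mapsto S(s)x$ is norm-continuously differentiable in $Y$ with derivative $S'(s)x$, so the Bochner-valued fundamental theorem of calculus gives $S(t+h)x-S(t)x=\int_t^{t+h}S'(s)x\,ds$ and therefore $\|S(t+h)x-S(t)x\|\le M|h|\,\|x\|$. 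Taking the supremum over $\|x\|\le 1$ yields $\|S(t+h)-S(t)\|\le M|h|$, which is in fact Lipschitz (hence norm) continuity.

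For part \eqref{idiff}, I would first use \eqref{diffbound} together with continuity of inversion on the open set of invertible operators to conclude that $t\mapsto S(t)^{-1}$ is itself norm-continuous, and in particular uniformly norm-bounded on the compact interval $J$. Then I would test strong differentiability of $S^{-1}$ at $t$ on a fixed vector $y\in Y$ using the standard resolvent-style identity
\[
\frac{S(t+h)^{-1}y-S(t)^{-1}y}{h}=-S(t+h)^{-1}\,\frac{S(t+h)-S(t)}{h}\,S(t)^{-1}y.
\]
Setting $x:=S(t)^{-1}y$, the strong differentiability hypothesis gives $h^{-1}(S(t+h)-S(t))x\to S'(t)x$ in the norm of $Y$, while norm-continuity of $S^{-1}$ gives $S(t+h)^{-1}\to S(t)^{-1}$ in $B(Y,X)$; combining these via the standard bound $\|T_hy_h-Ty\|\le\|T_h-T\|\,\|y_h\|+\|T\|\,\|y_h-y\|$ shows the difference quotient converges in norm to $-S(t)^{-1}S'(t)S(t)^{-1}y$. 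Finally, strong continuity of the proposed derivative $-S^{-1}S'S^{-1}$ follows because each factor is a strongly continuous, locally uniformly bounded family of operators, and composition of such families preserves strong continuity.

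The argument is essentially routine; the only point that genuinely requires care is ensuring the uniform operator-norm bounds needed to pass from pointwise (strong) convergence to convergence of compositions. This is where Banach--Steinhaus (for the uniform bound on $S'$) and compactness of $J$ (for the uniform bound on $S^{-1}$ obtained from its norm-continuity) do the work, and once those bounds are in hand every remaining step is a direct computation.
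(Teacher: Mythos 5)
Your proof is correct. For part (1), you take a mildly different route from the paper: the paper applies Banach--Steinhaus directly to the family of difference quotients $\bigl\{(s-t)^{-1}(S(s)-S(t)) : s\in J\setminus\{t\}\bigr\}$ at a fixed $t$ (a pointwise-bounded family because the strong limit $S'(t)x$ exists for each $x$), whereas you first apply Banach--Steinhaus to the strongly continuous family $S'$ itself to obtain a uniform bound $M=\sup_{s\in J}\|S'(s)\|$, and then invoke the Bochner-valued fundamental theorem of calculus to write $S(t+h)x - S(t)x = \int_t^{t+h} S'(s)x\,ds$. Both applications of uniform boundedness are equally legitimate; your version buys a single global Lipschitz constant $M$ valid on all of $J$, while the paper's gives a Lipschitz bound at each $t$ with a $t$-dependent constant, which already suffices for norm-continuity. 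For part (2), your argument is essentially the paper's: the identity $S(t+h)^{-1}-S(t)^{-1} = S(t+h)^{-1}\bigl(S(t)-S(t+h)\bigr)S(t)^{-1}$, together with norm-continuity of $S^{-1}$ (from part (1) plus continuity of inversion) and strong differentiability of $S$, gives the difference-quotient limit $-S(t)^{-1}S'(t)S(t)^{-1}y$; and your closing observation that $-S^{-1}S'S^{-1}$ is strongly continuous because it is a composition of strongly continuous, uniformly bounded families matches the general remark the paper makes before the lemma.
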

\begin{proof}
Norm-continuity at $t\in J$ is a consequence of the Banach-Steinhaus Theorem applied to 
$$F:=\left\{\left.\frac{1}{s-t}(S(s)-S(t)) \,\right|\, s\in J \backslash\{t\}\right\}.$$
As the inversion map is norm-continuous as well, also $S^{-1}$ is norm-continuous (and in particular uniformly bounded).
Let $t\in J$ and $h\in\R$ small enough such that $t+h \in J$. Then for $y\in Y$ we have 
\begin{align*}
(S(t+h)^{-1}-S(t)^{-1})y&=S(t+h)^{-1}(S(t)-S(t+h))S(t)^{-1}y\\
&=-S(t+h)^{-1}\big(hS'(t)S(t)^{-1}y+o(h)\big)\\
&=-hS(t)^{-1}S'(t)S(t)^{-1}y +o(h),
\end{align*}
which proves the second statement. 
\end{proof}

\begin{prop}
\label{prop:diff}
Let $X_0\subseteq X$ be a dense subspace, and let $Y_0\subseteq Y$ be a subspace of $Y$ with a stronger norm that turns it into a Banach space. 
Consider a family of operators $S\colon J\to B(X,Y_0)$. 
\begin{enumerate}
\item 
\label{prodiff}
Suppose $S\colon J\to B(X,Y_0)$ is strongly continuous and $S\colon J\to B(X,Y)$ is strongly continuously differentiable. If $R\colon J\to B(Y,Z)$ is a strongly continuous family that restricts to a strongly differentiable family in $B(Y_0,Z)$, then $t\mapsto R(t)S(t)$ is strongly continuously differentiable in $B(X,Z)$, with derivative $R'(t)S(t)+R(t)S'(t)$.
\item 
\label{strictdiff}
Suppose that the restriction $S|_{X_0}\colon J\to B(X_0,Y)$ is strongly continuously differentiable, such that the derivative extends to a strongly continuous family $S'\colon J\to B(X,Y_0)$. 
Then $S\colon J\to B(X,Y_0)$ is also strongly continuously differentiable with derivative $S'$.
\end{enumerate}
\end{prop}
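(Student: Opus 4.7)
Both parts are calculus in the Banach-space-valued setting, and the uniform bounds needed throughout come from Banach-Steinhaus applied on the compact interval $J$.

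For part (\ref{prodiff}), I would fix $x\in X$ and split the difference quotient as
\begin{align*}
R(t+h)S(t+h)x - R(t)S(t)x &= R(t+h)\bigl[S(t+h)x - S(t)x\bigr] \\
&\quad + \bigl[R(t+h)-R(t)\bigr]S(t)x.
\end{align*}
For the first summand, strong differentiability of $S$ in $B(X,Y)$ gives $S(t+h)x - S(t)x = hS'(t)x + o(h)$ in $Y$; applying $R(t+h)$ and using strong continuity of $R$ at $t$ together with the uniform norm bound on $R$, the summand equals $hR(t)S'(t)x + o(h)$ in $Z$. For the second summand the key observation is that $S(t)x\in Y_0$, so I may invoke the strong differentiability of $R$ on $Y_0$ to obtain $hR'(t)S(t)x + o(h)$ in $Z$. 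Adding these gives the derivative $R'(t)S(t) + R(t)S'(t)$, which is a well-defined element of $B(X,Z)$ precisely because $S(t)\in B(X,Y_0)$ and $R'(t)\in B(Y_0,Z)$. Strong continuity of this derivative then follows from the general fact (noted already in the preamble to Lemma \ref{lem:diff}) that a composition of strongly continuous families with compatible intermediate space is strongly continuous.

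For part (\ref{strictdiff}), my plan is to bootstrap from $X_0$ to $X$ via the fundamental theorem of calculus in Bochner form. For $x_0\in X_0$, the assumed differentiability of $S|_{X_0}$ yields $S(t)x_0 - S(s)x_0 = \int_s^t S'(r)x_0\,dr$ in $Y$. Since $S'$ maps into $B(X,Y_0)$ strongly continuously, the integrand $r\mapsto S'(r)x_0$ is already continuous $J\to Y_0$, so the Bochner integral exists in $Y_0$; the continuous (and injective) inclusion $Y_0\hookrightarrow Y$ commutes with Bochner integrals, hence the identity actually holds in $Y_0$. For a general $x\in X$ I pick $x_n\in X_0$ with $x_n\to x$ in $X$ and pass to the limit: the left-hand side converges in $Y_0$ because $S(t),S(s)\in B(X,Y_0)$, and the right-hand side converges in $Y_0$ because
\begin{align*}
\|S'(r)x_n - S'(r)x\|_{Y_0} \leq \sup_{r\in J}\|S'(r)\|_{B(X,Y_0)}\,\|x_n-x\|_X \longrightarrow 0
\end{align*}
uniformly in $r\in J$. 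Differentiating the resulting integral identity in $Y_0$, and using strong continuity of $S'$ in $B(X,Y_0)$, yields $\frac{d}{dt}(S(t)x) = S'(t)x$ in $Y_0$, which is the desired strong differentiability.

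The main technical subtlety, common to both parts, is keeping the two target norms on $Y$ and $Y_0$ straight: in part (\ref{prodiff}) only $R|_{Y_0}$ (not $R$ itself) is differentiable, which forces the asymmetric handling of the two summands; and in part (\ref{strictdiff}) the crux is upgrading the fundamental theorem of calculus from $Y$ to $Y_0$ before passing to the limit in $x$, since without this upgrade the right-hand side need not converge in the stronger norm at all.
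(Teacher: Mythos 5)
Your proof is correct and follows essentially the same route as the paper's (which is terser): part (\ref{prodiff}) is the standard product-rule decomposition, using uniform boundedness from Banach--Steinhaus to control the error terms and exploiting that $S(t)x\in Y_0$ for the $R$-increment; part (\ref{strictdiff}) is the fundamental theorem of calculus for the Bochner integral, upgraded from $Y$ to $Y_0$ via the coincidence of the integrals, and then extended from $X_0$ to $X$ by boundedness. Your write-up simply fills in the details the paper leaves implicit.
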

\begin{proof}
Since Banach-Steinhaus guarantees uniform boundedness of the differential quotients, composition can be treated like a continuous bilinear map and the calculation used to show the usual product rule can be applied verbatim to prove the first statement. 

Regarding the second statement, for $x\in X_0$, we have
$$S(t)x=S(t_0)x+\int\limits_{t_0}^tS'(r)xdr,$$
with the integral being taken in $Y$. Since the integrals in $Y_0$ and $Y$ coincide and both sides are bounded linear functions of $x$, we get the same equality in $Y_0$ for any $x\in X$.
\end{proof} 

\begin{remark}
The above proposition asserts in particular that pointwise compositions of strongly continuously differentiable families of operators are again strongly continuously differentiable.
We also note that this implies the analogous result for evaluation instead of composition: if $f\colon [0,T]\rightarrow Y$ is a function, we can set $S\colon [0,T]\rightarrow B(\C,Y)$, $S(t)(1)=f(t)$. Then $R(t)f(t)$ has the same regularity properties as  $R(t)S(t)$ in the strong topology. 
\end{remark}

\section{Spectral flow}
\label{sec:sf}

\begin{assumption}
\label{set_sf}
Let $\mH$ be a separable Hilbert space, let $W\subseteq \mH$ be a dense subspace, and let $\famop$ be a family of unbounded self-adjoint Fredholm operators on $\mH$ with constant domain $W$. 
We equip $W$ with the graph norm of $A(0)$. 
We \emph{assume} that the family $A$ is a norm-continuous map from $[0,T]$ to $B(W,H)$.\footnote{We note here that $W$ is complete (since $A(0)$ is closed), and it is then a consequence of the closed graph theorem that automatically $A(t)\in B(W,\mH)$ for each $t\in[0,T]$.}
\end{assumption}

The notion of spectral flow for a path of self-adjoint operators was first defined by Atiyah and Lusztig, and it appeared in the work of Atiyah, Patodi, and Singer \cite[\S7]{APS76}. 
Heuristically, the spectral flow of the family $A$ counts the number of eigenvalues of $A(t)$ (counted with multiplicities) crossing $0$ as $t$ varies from $0$ to $T$, i.e.\ the number of negative eigenvalues becoming positive minus that of positive eigenvalues becoming negative. 
In this article we will follow the analytic definition of spectral flow given by Phillips in \cite{Phi96}. 

\begin{df}
Consider an interval $I\subset \R$, and let $\chi_I$ denote the characteristic function of $I$.
For $t\in[0,T]$, consider the spectral projection of $A(t)$ and the corresponding spectral subspace given by 
\begin{align*}
P_I(t) &:= \chi_{I}(A(t)) , & 
\mH_I(t) &:= \Ran(P_I) . 
\end{align*}
For $a\in \R$, we will simply write 
\begin{align*}
P_{<a}(t)&:=P_{(-\infty,a)}(t) , & 
\mH_{<a}(t) &:= \Ran(P_{<a}) , 
\end{align*}
and similarly for $\geq a$. 
\end{df}

\begin{df}[{\cite{Phi96}}]
A partition 
$$0=t_0<t_1<...<t_N=T$$
together with numbers $a_n\in \Rp$ for $1\leq n\leq N$ will be called a \emph{flow partition} (for $A$), if 
for each $n$ and $t\in [t_{n-1},t_n]$ we have $a_n\notin\spec(A(t))$ and $\mH_{[0,a_n)}(t)$ is finite dimensional.
For such a partition, the spectral flow is defined as 
$$
\sfl(A)=\sum\limits_{n=1}^N \Dim(\mH_{[0,a_n)}(t_n))-\Dim(\mH_{[0,a_n)}(t_{n-1})) . 
$$
\end{df}
The spectral flow is well-defined, i.e.\ a flow partition exists and the spectral flow is independent of the choice of flow partition (\cite{Phi96}).
We also note that the spectral flow is unchanged by conjugating with unitaries, as this does not change the dimensions of the spectral subspaces.

A pair $(P,Q)$ of projections on $\mH$ is called a \emph{Fredholm pair}, if the restricted operator $Q|_{{\Ran(P)}\rightarrow{\Ran(Q)}}$ is Fredholm. 
In this case the \emph{(relative) index of $(P,Q)$} is defined to be the Fredholm index of $Q|_{{\Ran(P)}\rightarrow{\Ran(Q)}}$. 
If $P-Q$ is a compact operator, then $(P,Q)$ is a Fredholm pair. For more details regarding the index of a pair of projections, we refer to \cite{ASS94}.
We also quote the following result, which states that continuous families of Fredholm pairs have constant index:
\begin{lemma}[{\cite[Lemma 3.2]{Les05}}]
\label{contfred}
If $P,Q\colon [0,1]\rightarrow B(\mH)$ are continuous paths of projections in some Hilbert space $\mH$, such that $(P(t),Q(t))$ is a Fredholm pair for all $t\in[0,1]$,  then 
$$\ind(P(0),Q(0))=\ind(P(1),Q(1)).$$
\end{lemma}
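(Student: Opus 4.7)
My plan is to prove the lemma by showing that the integer-valued map $t\mapsto\ind(P(t),Q(t))$ is locally constant, which suffices since $[0,1]$ is connected. The subtlety is that the Fredholm operator $Q(t)|_{\Ran P(t)\to\Ran Q(t)}$ whose index we are tracking is defined between spaces that themselves vary with $t$; to apply the classical fact that the Fredholm index is locally constant in the norm topology on $B(\cdot,\cdot)$, I would transport everything to fixed reference spaces $\Ran P(t_0)$ and $\Ran Q(t_0)$ via intertwining isomorphisms.

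For this transport I would use Kato's transformation function. Fixing $t_0\in[0,1]$, I set
$$V_P(t):=P(t)P(t_0)+(\Id-P(t))(\Id-P(t_0)),\qquad V_Q(t):=Q(t)Q(t_0)+(\Id-Q(t))(\Id-Q(t_0)).$$
A direct calculation gives the intertwining identities $V_P(t)P(t_0)=P(t)V_P(t)=P(t)P(t_0)$, so whenever $V_P(t)$ is invertible on $\mH$ it restricts to a bounded isomorphism $\Ran P(t_0)\to\Ran P(t)$; the analogous statement holds for $V_Q(t)$. Since $V_P(t_0)=V_Q(t_0)=\Id$ and both families are norm-continuous, $V_P(t)$ and $V_Q(t)$ are invertible for all $t$ in some neighbourhood $U$ of $t_0$, with norm-continuous inverses.

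On $U$ one then has the factorisation
$$Q(t)|_{\Ran P(t)\to\Ran Q(t)}\;=\;V_Q(t)|_{\Ran Q(t_0)\to\Ran Q(t)}\,\circ\,T(t)\,\circ\,\bigl(V_P(t)|_{\Ran P(t_0)\to\Ran P(t)}\bigr)^{-1},$$
where $T(t):=V_Q(t)^{-1}Q(t)V_P(t)|_{\Ran P(t_0)\to\Ran Q(t_0)}$ is a norm-continuous family of bounded operators between the two \emph{fixed} Hilbert spaces; here the companion identity $V_Q(t)^{-1}Q(t)=Q(t_0)V_Q(t)^{-1}$, obtained by conjugating $Q(t)V_Q(t)=V_Q(t)Q(t_0)$, ensures that $T(t)$ really lands in $\Ran Q(t_0)$. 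Composition with isomorphisms preserves the Fredholm index, so $T(t)$ is Fredholm with $\ind T(t)=\ind(P(t),Q(t))$, and the classical local constancy of the Fredholm index in the operator-norm topology gives $\ind(P(t),Q(t))=\ind(P(t_0),Q(t_0))$ throughout $U$. The main obstacle I anticipate is purely the bookkeeping around these intertwining identities and the verification of norm-continuity of $T(t)$; once the Kato transformation functions are in hand, the rest is algebraic manipulation plus the classical Fredholm stability result.
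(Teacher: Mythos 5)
Your proof is correct. The paper itself does not prove this lemma — it is quoted verbatim from Lesch's paper (\cite[Lemma 3.2]{Les05}) — so there is no in-text argument to compare against. That said, the route you take (transport to fixed reference spaces via the Kato transformation functions $V_P(t)$, $V_Q(t)$, verify the intertwining identities $V_P(t)P(t_0)=P(t)V_P(t)$ and $V_Q(t)Q(t_0)=Q(t)V_Q(t)$, use norm-continuity plus $V_P(t_0)=V_Q(t_0)=\Id$ to get invertibility near $t_0$, factor $Q(t)|_{\Ran P(t)\to\Ran Q(t)}$ through the norm-continuous family $T(t)$ between \emph{fixed} spaces, and invoke local constancy of the Fredholm index) is exactly the standard argument, and is in substance what underlies Lesch's own proof. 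The bookkeeping you flagged as the main obstacle does indeed go through: global invertibility of $V_P(t)$ on $\mH$ together with the intertwining identity gives $V_P(t)^{-1}P(t)V_P(t)=P(t_0)$, which immediately yields that $V_P(t)$ restricts to an isomorphism $\Ran P(t_0)\to\Ran P(t)$ (with inverse given by the corresponding restriction of $V_P(t)^{-1}$), and similarly for $V_Q$; the companion identity $V_Q(t)^{-1}Q(t)=Q(t_0)V_Q(t)^{-1}$ shows $T(t)=Q(t_0)V_Q(t)^{-1}V_P(t)|_{\Ran P(t_0)}$, which is manifestly norm-continuous in $B\big(\Ran P(t_0),\Ran Q(t_0)\big)$. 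Connectedness of $[0,1]$ then finishes the argument. One could alternatively argue via the Avron--Seiler--Simon characterisation of Fredholm pairs (i.e.\ $\pm1\notin\sigma_{\mathrm{ess}}(P-Q)$, with the index computed from $\dim\Ker(P-Q-1)-\dim\Ker(P-Q+1)$), but that is not more elementary than what you did, and your version has the advantage of reducing directly to the classical stability of the Fredholm index for operators between fixed Banach spaces.
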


The following result relates the spectral flow of a family to the relative index of the spectral projections at the endpoints. 
Its proof combines arguments from \cite[\S4.2]{BS19} (reformulated in terms of spectral projections) with Lemma \ref{contfred}. 
\begin{thm}
\label{flowind}
If $(P_{<0}(0),P_{<0}(t))$ is a Fredholm pair for all $t\in[0,T]$, we have
$$\sfl(A)=\ind(P_{<0}(0),P_{<0}(T)).$$
\end{thm}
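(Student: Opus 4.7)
The plan is to prove the statement locally on each piece of a flow partition $0=t_0<t_1<\cdots<t_N=T$ with numbers $a_n\geq 0$, and then telescope. Concretely, for each $n$ I aim to establish
\begin{align*}
\ind(P_{<0}(t_{n-1}),P_{<0}(t_n)) = \Dim\mH_{[0,a_n)}(t_n) - \Dim\mH_{[0,a_n)}(t_{n-1}).
\end{align*}
Granting this, the hypothesis that each $(P_{<0}(0),P_{<0}(t_n))$ is a Fredholm pair together with the standard additivity of the relative index on chains of Fredholm pairs (cf.\ \cite{ASS94}) implies that the consecutive pairs $(P_{<0}(t_{n-1}),P_{<0}(t_n))$ are also Fredholm and that their indices telescope to $\ind(P_{<0}(0),P_{<0}(T))$; summing the right-hand sides then gives $\sfl(A)$ by definition.

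To prove the local formula on $[t_{n-1},t_n]$, I would route everything through the intermediate projection $P_{<a_n}$ and use the orthogonal decomposition $\mH_{<a_n}(t)=\mH_{<0}(t)\oplus\mH_{[0,a_n)}(t)$, whose second summand is finite-dimensional because $a_n\notin\spec(A(t))$ and $\mH_{[0,a_n)}(t)$ is already finite-dimensional by the flow partition condition. An elementary inspection of $P_{<0}(t)|_{\mH_{<a_n}(t)\to\mH_{<0}(t)}$ (just projection onto the first summand) gives
\begin{align*}
\ind(P_{<a_n}(t),P_{<0}(t)) = \Dim\mH_{[0,a_n)}(t), \qquad \ind(P_{<0}(t),P_{<a_n}(t)) = -\Dim\mH_{[0,a_n)}(t).
\end{align*}
Applying additivity along the chain $P_{<0}(t_{n-1})\to P_{<a_n}(t_{n-1})\to P_{<a_n}(t_n)\to P_{<0}(t_n)$ therefore reduces the local claim to proving
\begin{align*}
\ind(P_{<a_n}(t_{n-1}),P_{<a_n}(t_n)) = 0.
\end{align*}

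For this last identity I would invoke Lemma \ref{contfred}. Since $A\colon[0,T]\to B(W,\mH)$ is norm-continuous and $a_n$ lies in a common spectral gap of $\{A(t)\}_{t\in[t_{n-1},t_n]}$, the projection $P_{<a_n}(\cdot)$ is itself norm-continuous on this subinterval (a standard consequence of perturbation theory, obtainable for instance by passing to the Cayley transform $(A(t)-i)(A(t)+i)^{-1}$ to reduce to a norm-continuous family of unitaries whose spectra avoid the image of $a_n$, and then using a bounded Riesz contour around the relevant arc). By uniform continuity on the compact subinterval, I may refine the partition so that $\|P_{<a_n}(t_{n-1})-P_{<a_n}(t)\|<1$ for all $t$; every such pair is then automatically Fredholm of index $0$, and Lemma \ref{contfred} propagates this vanishing index from $t=t_{n-1}$ to $t=t_n$.

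The main obstacle I expect is precisely the norm-continuity of $P_{<a_n}(\cdot)$ on each subinterval: the spectral subspace $\mH_{<a_n}(t)$ is generally infinite-dimensional, so one cannot simply write $P_{<a_n}(t)$ as a bounded Riesz contour integral around the negative spectrum, and some care is required (via the Cayley-transform reduction sketched above, or via a smooth functional calculus applied to a cutoff function supported across the gap). Once this continuity is secured, all remaining steps — the finite-dimensional index calculation with $P_{[0,a_n)}$, the additivity along chains, and the telescoping — are routine.
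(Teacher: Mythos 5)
There is a genuine gap in your telescoping step. You invoke ``the standard additivity of the relative index on chains of Fredholm pairs (cf.\ \cite{ASS94})'' to conclude both (i) that the consecutive pairs $(P_{<0}(t_{n-1}),P_{<0}(t_n))$ are Fredholm and (ii) that their indices sum to $\ind(P_{<0}(0),P_{<0}(T))$. Neither is a consequence of the cited reference in the generality you need: the cocycle property $\ind(P,R)=\ind(P,Q)+\ind(Q,R)$ proved in \cite{ASS94} requires the differences $P-Q$, $Q-R$ to lie in a Schatten ideal (in particular to be compact), whereas your hypothesis only gives Fredholmness of the pairs $(P_{<0}(0),P_{<0}(t))$ and says nothing about compactness of the differences. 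Moreover, (i) is actually false as a formal consequence of the hypothesis alone --- it does hold here, but because $P_{<0}(t_{n-1})-P_{<0}(t_n)$ equals a finite-rank operator plus an operator of small norm after refining the partition, not because of any additivity argument. As for (ii), the natural attempt to reduce additivity to multiplicativity of Fredholm indices under composition fails: the composition $P_{<0}(t_n)|_{\mH_{<0}(t_{n-1})}\circ\cdots\circ P_{<0}(t_1)|_{\mH_{<0}(0)}$ inserts the intermediate projections and is \emph{not} equal to $P_{<0}(T)|_{\mH_{<0}(0)}$, nor even a compact perturbation of it, so the chain argument does not close without further work.

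The paper's proof sidesteps all of this by never introducing consecutive pairs at all. It fixes the basepoint $\mH_{<0}(0)$ once and for all, defines $P_{<a}(t)_r := P_{<a}(t)|_{\mH_{<0}(0)\to\mH_{<a}(t)}$ always as a restriction from that fixed subspace, and uses the composition $P_{<0}(t)_r = P_t\,P_{<a_n}(t)_r$ (which works because $\mH_{<0}(t)\subset\mH_{<a_n}(t)$, so the intermediate projection drops out). Fredholmness of every operator in sight then follows directly from the hypothesis plus finite-dimensionality of $\mH_{[0,a_n)}(t)$, and the ``telescope'' collapses to a trivial cancellation of integers $\sum_n\big[\ind(P_{<0}(t_n)_r)-\ind(P_{<0}(t_{n-1})_r)\big]$. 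Your local computation on each $[t_{n-1},t_n]$ and your continuity argument for $P_{<a_n}$ (via Kato or the Cayley transform) are fine; if you want to keep the chain structure you would need to actually establish the cocycle identity for relative indices under the mixed ``finite-rank plus small-norm'' hypothesis, but it is cleaner to reorganise the argument around the fixed basepoint as the paper does.
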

\begin{proof}
Let $(t_n)$, $(a_n)$ be a flow partition for $A$. 
For any $a\in\R$, let $P_{<a}(t)_r$ denote the restriction 
$$P_{<a}(t)_r:=P_{<a}(t)\big|_{{\mH_{<0}(0)}\rightarrow{\mH_{<a}(t)}} .$$
Since $(P_{<0}(0),P_{<0}(t))$ is a Fredholm pair, we know that $P_{<0}(t)_r$ is Fredholm. 
Fix $n\leq N$. For $t\in[t_{n-1},t_n]$, let 
$$P_t:=P_{<0}(t)\big|_{{\mH_{<a_n}(t)}\rightarrow{\mH_{<0}(t)}}$$
be the restriction of $P_{<0}(t)$, which is Fredholm since $\mH_{[0,a_n)}(t)$ is finite-dimensional.  
We have
$$P_{<0}(t)_r=P_tP_{<a_n}(t)_r.$$
As $P_{<0}(t)_r$ and $P_t$ are Fredholm,
it follows that $P_{<a_n}(t)_r$ is Fredholm as well, and we have
$$\ind(P_{<0}(t)_r)=\ind(P_t)+\ind(P_{<a_n}(t)_r)=\Dim(\mH_{[0,a_n)}(t))+\ind(P_{<a_n}(t)_r).$$
Since $a_n\notin\spec(A(t))$ for $t\in[t_{n-1},t_n]$, it follows from \cite[Ch.\ 6, Theorem 5.12]{Kato80} that $P_{<a_n}(t)_r$ is continuous in $t$ on $[t_{n-1},t_n]$. By Lemma \ref{contfred}, $(P_{<0}(0), P_{<a_n}(t))$ has constant index for $t\in [t_{n-1},t_n]$. Thus we have
\begin{align*}
\ind(P_{<a_n}(t_n)_r)=\ind(P_{<0}(0),P_{<a_n}(t_n))=\ind(P_{<0}(0),P_{<a_n}(t_{n-1}))=\ind(P_{<a_n}(t_{n-1})_r) . 
\end{align*}
Moreover, as $P_{<0}(0)_r$ is the identity on $\mH_{(-\infty,0)}(0)$, it has index $0$.
Combining everything, we get:
\begin{align*}
\ind(P_{<0}(0),P_{<0}(T))
&=\ind(P_{<0}(T)_r)\\
&=\ind(P_{<0}(T)_r)-\ind(P_{<0}(0)_r)\\
&=\sum\limits_{n=1}^N \ind( P_{<0}(t_n)_r)-\ind(P_{<0}(t_{n-1})_r)\\
&=\sum\limits_{n=1}^N \Dim(\mH_{[0,a_n)}(t_n))+\ind(P_{<a_n}(t_n)_r)\\
&\qquad-\Dim(\mH_{[0,a_n)}(t_{n-1}))-\ind(P_{<a_n}(t_{n-1})_r)\\
&=\sum\limits_{n=1}^N \Dim(\mH_{[0,a_n)}(t_n))-\Dim(\mH_{[0,a_n)}(t_{n-1}))\\
&=\sfl(A).
\qedhere
\end{align*}
\end{proof}

\begin{remark}
\label{remark:flowind}
A similar theorem was proven in \cite[Theorem 3.6]{Les05}. There, the family $A$ is only assumed to `Riesz continuous' (instead of norm-continuous). On the other hand, \cite[Theorem 3.6]{Les05} makes the additional assumption that the difference $A(t)-A(0)$ is relatively compact (with respect to $A(0)$). The latter assumption ensures (by \cite[Corollary 3.5]{Les05}) that $P_{<0}(0)-P_{<0}(t)$ is compact, so in particular $(P_{<0}(0),P_{<0}(t))$ is a Fredholm pair for all $t\in[0,T]$. 
Thus, in the case of \emph{norm}-continuous families, our Theorem \ref{flowind} generalises \cite[Theorem 3.6]{Les05}, since we do not require compactness of $P_{<0}(0)-P_{<0}(t)$. 
\end{remark}

\section{The `Riemannian' APS-index}
\label{Riem}

In this section, we slightly strengthen Assumption \ref{set_sf} by assuming that $A(t)$ is not only Fredholm but in fact has compact resolvents. 
Thus throughout this section we consider the following setting. 
\begin{assumption}
\label{set_Riem_APS}
Let $\mH$ be a separable Hilbert space, let $W\subseteq \mH$ be a dense subspace such that the inclusion is compact, and let $\famop$ be a family of unbounded self-adjoint operators on $\mH$ with constant domain $W$. 
We equip $W$ with the graph norm of $A(0)$. 
We \emph{assume} that the family $A\colon[0,T]\to B(W,\mH)$ is norm-continuous.
\end{assumption}

We continuously extend the family $\famop$ to a family $\{\tilde A(t)\}_{t\in\R}$ parametrised by the whole real line, defined by 
\[
\tilde A(t) := 
\begin{cases}
A(0) , & \text{if } t\leq0 , \\
A(t) , & \text{if } 0\leq t\leq T , \\
A(T) , & \text{if } t\geq T .
\end{cases}
\]
We introduce the following spaces:
\begin{align*}
\mW &:= L^2(\R,W) \cap H^1(\R,\mH) , \\
\mW_{\APS} &:= \big\{ f\in L^2([0,T],W) \cap H^1([0,T],\mH) : f(0)\in\mH_{<0}(0) , \; f(T)\in\mH_{\geq0}(T) \big\} , \\
\mW_{\APS}^\dagger &:= \big\{ f\in L^2([0,T],W) \cap H^1([0,T],\mH) : f(0)\in\mH_{\geq0}(0) , \; f(T)\in\mH_{<0}(T) \big\} . 
\end{align*}
Here $H^1(\R)\subset L^2(\R)$ denotes the standard first Sobolev space, and $H^1(\R,\mH) \simeq H^1(\R)\otimes\mH$. We note that the evaluation $\ev_t\colon H^1(\R,\mH)\to\mH$, $f\mapsto f(t)$, is well-defined (since elements in $H^1(\R)$ are continuous). 

\begin{df}
\label{df:Riem_D}
We consider the following operators:
\begin{itemize}
\item $\tilde D := \partial_t + \tilde A$ on the Hilbert space $L^2(\R,\mH)$ with initial domain $C_c^1(\R,W)$, and 
\[
\tilde\D := \mat{0}{-\partial_t+\tilde A}{\partial_t+\tilde A}{0} 
\]
on the Hilbert space $L^2(\R,\mH)^{\oplus2}$ with initial domain $C_c^1(\R,W)^{\oplus2}$. 
\item $D_{\APS} := \partial_t + A$ on the Hilbert space $L^2([0,T],\mH)$ with initial domain 
\[
\Dom D_{\APS} := \big\{ f\in C^1([0,T],W) : f(0)\in\mH_{<0}(0) , \; f(T)\in\mH_{\geq0}(T) \big\} , 
\]
and 
\[
\D_{\APS} := \mat{0}{-\partial_t+A}{\partial_t+A}{0} 
\]
on the Hilbert space $L^2([0,T],\mH)^{\oplus2}$ with initial domain $\Dom D_{\APS} \oplus \Dom D_{\APS}^\dagger$, where  
\[
\Dom D_{\APS}^\dagger := \big\{ f\in C^1([0,T],W) : f(0)\in\mH_{\geq0}(0) , \; f(T)\in\mH_{<0}(T) \big\} . 
\]
\end{itemize}
\end{df}

\begin{prop}
\label{prop:adjoint_APS}
\begin{enumerate}
\item The closure of the operator $\tilde\D$ is self-adjoint on the domain $\mW^{\oplus2}$, and for any $f\in C_c^\infty(\R)$, the operators $f\cdot(\tilde\D\pm i)^{-1}$ on $L^2(\R,\mH)$ are compact. 
\item The closure of the operator $\D_{\APS}$ is self-adjoint on the domain $\mW_{\APS}\oplus\mW_{\APS}^\dagger$, and the operators $(\D_{\APS}\pm i)^{-1}$ on $L^2([0,T],\mH)$ are compact. 
In particular, $\D_{\APS}$ is Fredholm. 
\end{enumerate}
\end{prop}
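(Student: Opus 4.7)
The strategy is to exploit the off-diagonal structure. Writing $D := \partial_t + \tilde A$ (resp.\ $\partial_t + A$) and $D^\dagger := -\partial_t + \tilde A$ (resp.\ $-\partial_t + A$) on the smooth initial domains from Definition~\ref{df:Riem_D}, both operators at hand are of the shape $\bigl(\begin{smallmatrix}0 & D^\dagger \\ D & 0\end{smallmatrix}\bigr)$, whose closure is self-adjoint precisely when $\overline{D}$ and $\overline{D^\dagger}$ are mutually adjoint as closed unbounded operators. So the task reduces to (i)~identifying $\Dom\overline{D}$ with $\mW$ (resp.\ $\mW_{\APS}$) and analogously for $D^\dagger$, and (ii)~establishing $\overline{D}{}^* = \overline{D^\dagger}$.

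I would first prove symmetry by integration by parts on the smooth initial domains. In case~(1) the boundary terms vanish by compact support; in case~(2) we are left with the pairing $\langle f(T),g(T)\rangle_{\mH} - \langle f(0),g(0)\rangle_{\mH}$, which vanishes because APS places $f(t_*)$ and $g(t_*)$ into the mutually orthogonal spectral subspaces $\mH_{<0}(t_*)$ and $\mH_{\geq0}(t_*)$ of the self-adjoint operator $A(t_*)$ at each endpoint $t_*\in\{0,T\}$. This gives $\overline{D^\dagger}\subseteq\overline{D}{}^*$ and hence symmetry of the off-diagonal operator on the claimed domain.

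The heavier step is the identification $\Dom\overline{D}=\mW$ (resp.\ $\mW_{\APS}$) and the reverse inclusion $\overline{D}{}^*\subseteq\overline{D^\dagger}$. For $\mW\subseteq\Dom\overline{D}$ I will approximate $f$ in the claimed domain by $\chi_n \cdot(\phi_n \ast f)$, where $\phi_n$ is a mollifier in $t$ and (in case~(1)) $\chi_n \in C_c^\infty(\R)$ is a cutoff; norm-continuity and uniform boundedness of $\tilde A\colon\R\to B(W,\mH)$ then force convergence in graph norm via the bound $\|Df\|_{L^2} \le \|\partial_t f\|_{L^2} + \sup_t\|\tilde A(t)\|_{B(W,\mH)} \|f\|_{L^2(W)}$. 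In case~(2) the mollification must be modified near the endpoints so as to preserve the APS conditions — e.g.\ by first extending $f$ across $0$ and $T$ using the spectral projections $P_{<0}(0)$ and $P_{\geq0}(T)$, which commute with $A(0)$ and $A(T)$ and so preserve the $W$-topology, and then mollifying on the extension. Conversely, if $g\in\Dom\overline{D}{}^*$ with $\overline{D}{}^*g=h$, then $-\partial_t g + \tilde A g=h$ holds distributionally; a difference-quotient argument exploiting pointwise self-adjointness of $\tilde A(t)$ upgrades this to $g\in L^2(J,W)\cap H^1(J,\mH)$ on the relevant interval $J$, and the usual trace argument promotes orthogonality against $\Dom\overline{D}$ into the swapped APS condition at the boundary.

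Once self-adjointness on the stated domain is established, the compactness claims follow from the Aubin--Lions lemma: the compact embedding $W\hookrightarrow\mH$ supplied by Assumption~\ref{set_Riem_APS} implies that $L^2([0,T],W)\cap H^1([0,T],\mH)\hookrightarrow L^2([0,T],\mH)$ is compact. In case~(2) the resolvent maps $L^2([0,T],\mH)^{\oplus2}$ continuously into $\mW_{\APS}\oplus\mW_{\APS}^\dagger$, so $(\D_{\APS}\pm i)^{-1}$ is compact; Fredholmness of $\D_{\APS}$ then follows because a self-adjoint operator with compact resolvent has purely discrete spectrum. In case~(1), multiplying the resolvent by $f\in C_c^\infty(\R)$ restricts the image to $L^2(\supp f, W)\cap H^1(\supp f,\mH)$, where the same Aubin--Lions argument applies. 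The main obstacle I anticipate is the boundary-respecting approximation in case~(2): constructing, for an arbitrary element of $\mW_{\APS}$, a sequence in the initial $C^1$ domain that satisfies APS at both endpoints and converges in graph norm — naive mollification destroys the endpoint spectral conditions, so the approximation must be tailored using the spectral projections $P_{<0}(0)$ and $P_{\geq0}(T)$.
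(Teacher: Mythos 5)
Your route is genuinely different from the paper's, which never computes an adjoint domain: instead it constructs a parametrix
$R_\pm(\lambda)=\chi_L(\D_L\pm i\lambda)^{-1}\chi_L+\chi_I(\tilde\D\pm i\lambda)^{-1}\chi_I+\chi_R(\D_R\pm i\lambda)^{-1}\chi_R$
from three model operators whose self-adjointness is imported wholesale — the half-line APS models $\D_L,\D_R$, obtained from the constant-coefficient $\D_0$ by a Kato--Rellich perturbation (using \cite[Prop.~2.12]{APS75}, \cite[Cor.~4.6]{BL01}, \cite[Prop.~4.11]{CPR10}), and the full-line operator $\tilde\D$ (using \cite{vdD19_Index_DS}) — glued by a quadratic partition of unity; a commutator estimate makes $\Id+K_\pm(\lambda)$ invertible for large $\lambda$, and compactness of the resolvent is inherited piecewise from the models rather than derived from Aubin--Lions. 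Your integration-by-parts computation of the boundary pairing, the orthogonality $\mH_{<0}(t_*)\perp\mH_{\geq0}(t_*)$, and the Aubin--Lions step (once the domain identification is in hand) are all correct.

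The real gap is the sentence asserting that ``a difference-quotient argument exploiting pointwise self-adjointness of $\tilde A(t)$ upgrades [the distributional equation] to $g\in L^2(J,W)\cap H^1(J,\mH)$.'' That is precisely the maximal-regularity statement that is the hard core of the proposition, and no generic difference-quotient device delivers it. The distributional identity $-\partial_t g+\tilde Ag=h$ is circular as a regularity tool: $\tilde Ag$ is only a $W^*$-valued object until one already knows $g\in L^2(J,W)$, and one cannot get $g\in L^2(J,W)$ from $\tilde Ag=\partial_t g+h$ without first knowing $\partial_t g\in L^2(J,\mH)$. For constant $A_0$ on $\R$ the loop is broken by the Fourier-side identity $\|(i\xi+A_0)\hat g(\xi)\|^2=\xi^2\|\hat g(\xi)\|^2+\|A_0\hat g(\xi)\|^2$; on a half-line with APS boundary conditions it is broken by the explicit spectral/semigroup decomposition (this is exactly the content of the references the paper invokes for $\D_0$); and for a $t$-dependent family on a bounded interval one must reduce to those models, which is what the paper's Kato--Rellich-plus-gluing step does. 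You name the regularity step but supply no mechanism for it. Relatedly, you have misplaced the main difficulty: the boundary-respecting approximation you flag as ``the main obstacle'' becomes redundant once the regularity inclusion is established, because $\overline{D}{}^*\subseteq D^\dagger|_{\mW_{\APS}^\dagger}$ together with the symmetry $D|_{\mW_{\APS}}\subseteq\bigl(D^\dagger|_{\mW_{\APS}^\dagger}\bigr)^*$ already forces $\mW_{\APS}\subseteq\Dom\overline{D}{}^{**}=\Dom\overline{D}$. It is the regularity direction, not the approximation, that carries all the weight, and it is exactly there that the proposal is incomplete.
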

\begin{proof}
The self-adjointness of $\tilde\D$ on $\mW^{\oplus2}$ follows as in \cite[Proposition 3.16]{vdD19_Index_DS}. Moreover, we know from \cite[Proposition 4.1]{vdD19_Index_DS} (cf.\ the proof of \cite[Theorem 6.7]{KL13}) that $f\cdot (\tilde\D\pm i)^{-1}$ is compact for every $f\in C_0(M)$, which proves (1). 

Next, we will prove the self-adjointness of $\D_{\APS}$. 
Since $A$ is norm-continuous, we can pick $0<\varepsilon<\frac12$ small enough such that 
\begin{align*}
\sup_{t\in[0,\varepsilon]} \big\| \big( A(t) - A(0) \big) \big( A(0)-i \big)^{-1} \big\| &< \frac12 , & 
\sup_{t\in[T-\varepsilon,T]} \big\| \big( A(t) - A(T) \big) \big( A(0)-i \big)^{-1} \big\| &< \frac12 . 
\end{align*}
We consider a new norm-continuous family $A_L\colon[0,\infty)\to B(W,\mH)$ given by
\[
A_L(t) := 
\begin{cases}
A(t) , & \text{ if } 0\leq t\leq\varepsilon , \\
A(\varepsilon) , & \text{ if } t\geq\varepsilon . 
\end{cases}
\]
Consider the operators 
\begin{align*}
\D_0 &:= \mat{0}{-\partial_t+A(0)}{\partial_t+A(0)}{0} , & 
\D_L &:= \mat{0}{-\partial_t+A_L}{\partial_t+A_L}{0} ,
\end{align*}
on the Hilbert space $L^2([0,\infty),\mH)$ with domain $\mW_L\oplus\mW_L^\dagger$, where we introduce the spaces 
\begin{align*}
\mW_L &:= \big\{ f\in L^2([0,\infty),W) \cap H^1([0,\infty),\mH) : f(0)\in\mH_{<0}(0) \big\} , \\
\mW_L^\dagger &:= \big\{ f\in L^2([0,\infty),W) \cap H^1([0,\infty),\mH) : f(0)\in\mH_{\geq0}(0) \big\} . 
\end{align*}
We recall that the operator $\D_0$ is self-adjoint (see \cite[Proposition 2.12]{APS75} or, for the more abstract setting, \cite[Corollary 4.6]{BL01} and \cite[Proposition 4.11]{CPR10}). 
As in the proof of \cite[Lemma 3.13]{vdD19_Index_DS}, we can estimate 
\begin{align*}
\big\| ( \D_L - \D_0 ) (\D_0-i)^{-1} \big\| 
&\leq \big\| ( A_L - A(0) ) (A(0)-i)^{-1} \big\| \; \big\| ( A(0)-i ) (\D_0-i)^{-1} \big\| \\
&\leq \sup_{t\in[0,\varepsilon]} \big\| \big( A(t) - A(0) \big) \big( A(0)-i \big)^{-1} \big\| < \frac12 , 
\end{align*}
where we have used that $\big\| ( A(0)-i ) (\D_0-i)^{-1} \big\| \leq 1$. 
By the Kato-Rellich Theorem, it then follows that $\D_L$ is also self-adjoint on the domain $\mW_L\oplus\mW_L^\dagger$. 
Similarly, the operator 
\begin{align*}
\D_R &:= \mat{0}{-\partial_t+A_R}{\partial_t+A_R}{0} , & 
A_R(t) &:= 
\begin{cases}
A(T-\varepsilon) , & \text{ if } t\leq T-\varepsilon , \\
A(t) , & \text{ if } T-\varepsilon\leq t\leq T , 
\end{cases}
\end{align*}
is self-adjoint on the domain $\mW_R\oplus\mW_R^\dagger$, where 
\begin{align*}
\mW_R &:= \big\{ f\in L^2((-\infty,T],W) \cap H^1((-\infty,T],\mH) : f(T)\in\mH_{\geq0}(T) \big\} , \\
\mW_R^\dagger &:= \big\{ f\in L^2((-\infty,T],W) \cap H^1((-\infty,T],\mH) : f(T)\in\mH_{<0}(T) \big\} . 
\end{align*}
Now pick smooth functions 
$\chi_L,\chi_I,\chi_R\colon\R\to[0,1]$ 
such that $\{\chi_L^2,\chi_I^2,\chi_R^2\}$ is a partition of unity subordinate to the open cover $\{(-\infty,\varepsilon),(0,T),(T-\varepsilon,\infty)\}$ of $\R$. 
For $\lambda>0$, we define 
\[
R_\pm(\lambda) := \chi_L (\D_L\pm i\lambda)^{-1} \chi_L + \chi_I (\tilde\D\pm i\lambda)^{-1} \chi_I + \chi_R (\D_R\pm i\lambda)^{-1} \chi_R .
\]
Since $\D_{\APS}$ agrees with $\D_L$ on $[0,\varepsilon)$, agrees with $\tilde\D$ on $(0,T)$, and agrees with $\D_R$ on $(T-\varepsilon,T]$, we note that $\Ran R_\pm(\lambda)\subset\mW_{\APS}\oplus\mW_{\APS}^\dagger$, and we can compute 
\begin{gather*}
(\D_{\APS}\pm i\lambda) R_\pm(\lambda) = \Id + K_\pm(\lambda) , \\
K_\pm(\lambda) := [\D_L,\chi_L](\D_L\pm i\lambda)^{-1} \chi_L + [\tilde\D,\chi_I](\tilde\D\pm i\lambda)^{-1} \chi_I + [\D_R,\chi_R](\D_R\pm i\lambda)^{-1} \chi_R .
\end{gather*}
By choosing $\lambda$ large enough, we may ensure that $\|K_\pm(\lambda)\|<1$, so that $\Id+K_\pm(\lambda)$ is invertible, and then $R_\pm(\lambda)\big(\Id+K_\pm(\lambda)\big)^{-1}$ is a right inverse for $\D_{\APS}\pm i\lambda$. Similarly, we can also construct a left inverse for $\D_{\APS}\pm i\lambda$. Thus $\D_{\APS}\pm i\lambda$ is invertible, which proves that $\D_{\APS}$ is self-adjoint. 

Finally, we know from (1) that $\chi_I (\tilde\D\pm i\lambda)^{-1}$ is compact. 
Furthermore, the operator $\chi_L (\D_0\pm i\lambda)^{-1}$ is compact by \cite[Proposition 4.14]{CPR10}, and since $\Dom\D_L=\Dom\D_0$ this implies that $\chi_L (\D_L\pm i\lambda)^{-1}$ is compact. 
Similarly, also $\chi_R (\D_R\pm i\lambda)^{-1}$ is compact. 
Hence also $R_\pm(\lambda)$ is compact, and therefore $(\D_{\APS}\pm i)^{-1}$ is compact. This completes the proof of (2). 
\end{proof}

\subsection{APS-index and spectral flow}

We first consider the special case where the family $A$ is invertible at the endpoints of the interval $[0,T]$. 
In this case, we recall the following equality between index and spectral flow on the real line. 
\begin{thm}[{\cite[Theorem 2.1]{AW11}}]
\label{thm:Riem_index_sfl}
If $A(0)$ and $A(T)$ are invertible, then the operator $\tilde D$ is Fredholm, and we have the equality
\[
\ind(\tilde D) = \sfl(A) . 
\]
\end{thm}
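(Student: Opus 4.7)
The plan has two parts: establishing Fredholmness of $\tilde D$, and then identifying its index with the spectral flow.

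For Fredholmness, the key input is that $\tilde A$ is constant and invertible on each of the half-lines $(-\infty,0]$ and $[T,\infty)$. For a constant self-adjoint invertible operator $B$ on $\mH$, the operator $\partial_t + B$ on $L^2(\R,\mH)$ is invertible by Fourier transform in $t$, with inverse given by multiplication by $(i\xi + B)^{-1}$ and operator norm bounded by $\|B^{-1}\|$. Using a partition of unity $\{\chi_-,\chi_I,\chi_+\}$ subordinate to $\{(-\infty,\varepsilon),(-\delta,T+\delta),(T-\varepsilon,\infty)\}$ for small $\varepsilon,\delta>0$, I would build a parametrix
\[
R_\pm(\lambda) := \chi_- (\partial_t + A(0) \pm i\lambda)^{-1} \chi_- + \chi_I (\tilde D \pm i\lambda)^{-1} \chi_I + \chi_+ (\partial_t + A(T) \pm i\lambda)^{-1} \chi_+,
\]
where the middle resolvent exists for $\lambda$ sufficiently large by a Kato-Rellich argument analogous to that used for $\D_{\APS}$ in Proposition \ref{prop:adjoint_APS}. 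Applying $\tilde D \pm i\lambda$ produces the identity plus a remainder built from commutators $[\partial_t,\chi_\bullet]$ times resolvents; because the cutoffs are compactly supported and $W \hookrightarrow \mH$ is compact, this remainder is compact, and taking $\lambda$ large makes its norm less than one. A Neumann series correction then upgrades the parametrix to an inverse modulo compacts, proving Fredholmness.

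For the index formula, I would invoke homotopy invariance. Both $\ind(\tilde D)$ and $\sfl(A)$ are invariant under norm-continuous homotopies of $A$ for which $A(0)$ and $A(T)$ remain invertible throughout (for the spectral flow this is part of the foundational setup in \cite{Phi96}; for the index it follows from continuity of $\tilde D$ in the gap metric together with stability of Fredholm indices). Using this, one deforms $A$ to a concatenation of two kinds of elementary paths. Along paths with no spectral crossing at $0$, the semigroups $e^{-tA(0)}$ on $(-\infty,0]$ and $e^{-(t-T)A(T)}$ on $[T,\infty)$ respectively force any $L^2$-solution of $\tilde D f = 0$ to take value in $\mH_{<0}(0)$ at $t=0$ and $\mH_{\geq 0}(T)$ at $t=T$, so the well-posed forward evolution on $[0,T]$ together with absence of a crossing yields trivial kernel and cokernel, matching $\sfl = 0$. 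Along an elementary path crossing a single simple eigenvalue, the problem reduces via a finite-rank spectral splitting of $\mH$ to the scalar model $\partial_t + \lambda(t)$ with $\lambda$ changing sign once, where a direct computation gives $\ind(\tilde D) = \pm 1 = \sfl(A)$. Summing over the elementary pieces yields the equality.

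The main obstacle is making the reduction to elementary paths rigorous for merely \emph{norm}-continuous families, without any smoothness assumption or relative-compactness hypothesis on $A(t) - A(0)$. This is precisely what is carried out in \cite[Theorem 2.1]{AW11} and \cite[Theorem 5.2]{vdD19_Index_DS}, which extend the smooth-case theorem of Robbin-Salamon \cite{RS95} by a careful continuity argument for the Fredholm index under norm-continuous perturbations of the family.
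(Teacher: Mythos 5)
This theorem is not proved in the paper: it is quoted verbatim from \cite[Theorem 2.1]{AW11} (with \cite[Theorem 5.2]{vdD19_Index_DS} cited as an alternative source), so there is no internal proof to compare against, and any evaluation must be of the proposal on its own terms.

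On those terms, the Fredholmness parametrix has a genuine gap. Your middle term $\chi_I(\tilde D\pm i\lambda)^{-1}\chi_I$ uses the resolvent of $\tilde D$ itself, the very operator whose Fredholmness you are trying to establish, and there is no a priori reason this resolvent exists. In Proposition \ref{prop:adjoint_APS} the corresponding middle piece is $\chi_I(\tilde\D\pm i\lambda)^{-1}\chi_I$, where $\tilde\D$ is the \emph{self-adjoint} $2\times2$ matrix operator whose self-adjointness is established first and independently; that is what makes $(\tilde\D\pm i\lambda)^{-1}$ available. By contrast $\tilde D=\partial_t+\tilde A$ is not self-adjoint (its adjoint is $-\partial_t+\tilde A$), and shifting by $\pm i\lambda$ for large real $\lambda$ does not push one into the resolvent set as it would for a symmetric operator. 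The Kato--Rellich argument you invoke applies to the \emph{half-line} operators $\D_L$, $\D_R$ in the paper, not to the whole-line $\tilde\D$ or $\tilde D$. To repair this you should either pass to the self-adjoint companion $\tilde\D$ (using that $\tilde D$ is Fredholm iff $\tilde\D$ is, and quoting the self-adjointness of $\tilde\D$ from Proposition \ref{prop:adjoint_APS}(1)), or build the middle parametrix piece from a reference operator whose invertibility you already know, rather than from $\tilde D$.

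For the index identity, your outline (homotopy to elementary crossings, scalar model, summation) is the standard strategy, but you explicitly defer the hard step, namely justifying the reduction for merely norm-continuous families, to the cited references. That is honest, but it means the proposal is a pointer to the literature rather than a proof; and note that the homotopy invariance of the index requires Fredholmness along the whole deformation, which circles back to the gap above.
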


\begin{prop}
\label{prop:Riem_APS_index}
Assume that $A(0)$ and $A(T)$ are invertible. Then we have isomorphisms 
\begin{align*}
\Ker D_{\APS} &\simeq \Ker \tilde D , & 
\Ker {D_{\APS}}^* &\simeq \Ker\tilde D^* , 
\end{align*}
and consequently we have the equality 
\[
\ind(D_{\APS}) = \ind(\tilde D) . 
\]
\end{prop}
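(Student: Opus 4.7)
The plan is to construct mutually inverse maps $\Ker D_{\APS} \leftrightarrow \Ker \tilde D$ and $\Ker D_{\APS}^* \leftrightarrow \Ker \tilde D^*$ by extension and restriction. The key observation is that on each half-line $(-\infty,0]$ and $[T,\infty)$ the extended family $\tilde A$ is constant (equal to $A(0)$ and $A(T)$ respectively), so any solution of $\tilde D g = 0$ on $(-\infty,0]$ is necessarily of the form $g(t) = e^{-tA(0)}g(0)$, and similarly on $[T,\infty)$. Invertibility of $A(0)$ and $A(T)$ separates their spectra from zero by a gap, so each of $\mH_{<0}(0)$, $\mH_{>0}(T)$, $\mH_{\geq 0}(0)$, $\mH_{<0}(T)$ carries an exponentially decaying semigroup in the relevant time direction.

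For the forward direction, given $f \in \Ker D_{\APS}$ I would extend it by
\[
\tilde f(t) := \begin{cases} e^{-tA(0)} f(0), & t \le 0, \\ f(t), & 0 \le t \le T, \\ e^{-(t-T)A(T)} f(T), & t \ge T. \end{cases}
\]
The APS condition $f(0)\in\mH_{<0}(0)$ places the spectral measure $\mu_{f(0)}$ on $(-\infty,-\delta]$ for some $\delta>0$, and the spectral theorem together with Fubini give
\[
\int_{-\infty}^0 \|e^{-tA(0)} f(0)\|_W^2\,dt \;=\; \int_{(-\infty,-\delta]}\frac{1+\lambda^2}{-2\lambda}\,d\mu_{f(0)}(\lambda),
\]
which is finite provided $f(0)\in\Dom|A(0)|^{1/2}$. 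This interpolation regularity is exactly what the standard trace theorem delivers for elements of $L^2([0,T],W)\cap H^1([0,T],\mH)$, so $\tilde f|_{(-\infty,0]}\in L^2((-\infty,0],W)\cap H^1((-\infty,0],\mH)$; a symmetric argument at the right endpoint uses $f(T)\in\mH_{>0}(T)$. Continuity of $\tilde f$ across $t=0$ and $t=T$ upgrades $\tilde f$ to an element of $\mW$, and $\tilde D\tilde f=0$ holds globally because it holds on each of the three subintervals.

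For the reverse direction, if $g\in\Ker\tilde D$ then $g(t)=e^{-tA(0)}g(0)$ for all $t\le 0$; decomposing $g(0)=P_{<0}(0)g(0)+P_{>0}(0)g(0)$ (with no zero component thanks to invertibility) and using $\|e^{-tA(0)}P_{>0}(0)g(0)\|\ge\|P_{>0}(0)g(0)\|$ for $t\le 0$, the $L^2$-integrability of $g$ on $(-\infty,0]$ forces $P_{>0}(0)g(0)=0$. Hence $g(0)\in\mH_{<0}(0)$, and symmetrically $g(T)\in\mH_{>0}(T)=\mH_{\geq 0}(T)$, so $g|_{[0,T]}\in\mW_{\APS}$. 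The two maps are manifestly inverse to each other. Applying the same argument to $\tilde D^*=-\partial_t+\tilde A$, with signs on the exponentials flipped and spectral subspaces swapped, gives $\Ker D_{\APS}^*\simeq\Ker\tilde D^*$; here $L^2$-integrability forces the left endpoint into $\mH_{\geq 0}(0)$ and the right endpoint into $\mH_{<0}(T)$, exactly matching the definition of $\mW_{\APS}^\dagger$. Taking dimensions yields $\ind(D_{\APS})=\ind(\tilde D)$. The main technical obstacle is confirming the trace regularity $f(0)\in\Dom|A(0)|^{1/2}$, which is what makes the extended $\tilde f$ land in $L^2(\R,W)$; once this is in hand, everything else is spectral-theoretic bookkeeping.
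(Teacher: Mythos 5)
Your proof follows essentially the same route as the paper: extend elements of $\Ker D_{\APS}$ by exponentially decaying solutions on the half-lines $(-\infty,0]$ and $[T,\infty)$ (where $\tilde A$ is constant), and in the reverse direction use $L^2$-integrability to force the boundary values into the correct spectral subspaces; the paper writes the exponential in an eigenbasis of $A(0)$ and $A(T)$ (available since the resolvents are compact), while you use the functional-calculus form $e^{-tA(0)}$, which is the same thing. One point in your favour: you explicitly identify and address the trace-regularity requirement $f(0)\in\Dom|A(0)|^{1/2}$ needed for $\tilde f$ to land in $L^2(\R,W)$, a detail the paper's proof passes over silently.
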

\begin{proof}
The proof is an adaptation of the argument in \cite[Proposition 3.11]{APS75}.
Let $\{\psi_\lambda(t)\}_{\lambda\in\spec(A(t))}$ be an orthonormal basis of $\mH$ consisting of eigenvectors $\psi_\lambda(t)$ of $A(t)$ with eigenvalue $\lambda$ (where the eigenvalues are counted with multiplicities). 
For any element $f\in\Ker D_{\APS}$, we can write $f(0) = \sum_\lambda \mu_\lambda\psi_\lambda(0)$, for some $\mu_\lambda\in\C $ (recall that the evaluation $\ev_t\colon\Dom D_{\APS}\to\mH$ is well-defined, since $\Dom D_{\APS}\subset H^1(\R,\mH)$).  
We will extend $f$ to an element $\tilde f\in\Ker\tilde D$, as follows. 
Solving $(\partial_t+\tilde A)\tilde f=0$ for $t<0$ yields \[\frac{\partial}{\partial t} \<\psi_\lambda(0),\tilde f(t)\> = - \lambda \<\psi_\lambda(0),\tilde f(t)\>,\]
which implies
\[
\tilde f(t) = \sum_{\lambda<0} e^{-\lambda t} \mu_\lambda \psi_\lambda(0) , \qquad t\leq0 . 
\]
Here we have used the APS boundary condition $f(0) \in \mH_{<0}(0)$, which tells us that $\mu_\lambda = 0$ whenever $\lambda\geq0$. 
Writing instead $f(T) = \sum_\lambda \nu_\lambda \psi_\lambda(T)$ and solving $(\partial_t+\tilde A)\tilde f=0$ for $t>T$, we similarly obtain
\[
\tilde f(t) = \sum_{\lambda>0} e^{-\lambda(t-T)} \nu_\lambda \psi_\lambda(T) , \qquad t\geq T , 
\]
where we have used that $A(T)$ is invertible, so that $\lambda\neq0$. We can then define a map $\iota\colon\Ker D_{\APS}\to\Ker\tilde D$ by defining
\[
\iota(f)(t) := 
\begin{cases}
\sum_{\lambda<0} e^{-\lambda t} \mu_\lambda \psi_\lambda(0) , & \text{if } t\leq0 , \\
f(t) , & \text{if } 0\leq t\leq T , \\
\sum_{\lambda>0} e^{-\lambda(t-T)} \nu_\lambda \psi_\lambda(T) , & \text{if } t\geq T . 
\end{cases}
\]
This map $\iota$ is clearly injective. Conversely, given any $\xi\in\Ker\tilde D$, the requirement that $\xi$ is square-integrable ensures that $\xi$ must have the above form on $(\infty,0]$ and on $[T,\infty)$. By continuity, this implies that $\xi|_{[0,T]}$ satisfies the boundary conditions $\xi(0)\in\mH_{<0}(0)$ and $\xi(T)\in\mH_{>0}(T)$, and we conclude that $\xi = \iota(\xi|_{[0,T]})$. 
Thus we have shown that $\iota$ yields an isomorphism $\Ker D_{\APS} \xrightarrow{\simeq} \Ker\tilde D$. 
Similarly, we also obtain an isomorphism $\bar\iota\colon \Ker D_{\APS}^* \xrightarrow{\simeq} \Ker\tilde D^*$ given by  
\[
\bar\iota(f)(t) := 
\begin{cases}
\sum_{\lambda>0} e^{\lambda t} \mu_\lambda \psi_\lambda(0) , & \text{if } t\leq0 , \\
f(t) , & \text{if } 0\leq t\leq T , \\
\sum_{\lambda<0} e^{\lambda(t-T)} \nu_\lambda \psi_\lambda(T) , & \text{if } t\geq T . 
\end{cases}
\]
Since we know from Proposition \ref{prop:adjoint_APS} that $D_{\APS}$ is Fredholm, and from Theorem \ref{thm:Riem_index_sfl} that $\tilde D$ is Fredholm, the final statement follows immediately. 
\end{proof}

Proposition \ref{prop:Riem_APS_index} and Theorem \ref{thm:Riem_index_sfl} then immediately yield:
\begin{coro}
\label{coro:Riem_APS_index_sfl_inv}
If $A(0)$ and $A(T)$ are invertible, then 
\[
\ind(D_{\APS}) = \sfl(A) . 
\]
\end{coro}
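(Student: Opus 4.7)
The plan is that this corollary follows immediately by chaining the two previously established results, so the proof is essentially a one-line assembly rather than a substantive new argument. I would not attempt a fresh strategy here: all the real analytical work has already been carried out.

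More precisely, the first step is to invoke Proposition \ref{prop:Riem_APS_index}. Under the hypothesis that $A(0)$ and $A(T)$ are invertible, that proposition provides explicit isomorphisms $\Ker D_{\APS} \simeq \Ker\tilde D$ and $\Ker D_{\APS}^* \simeq \Ker\tilde D^*$, by extending kernel elements off $[0,T]$ via exponentially decaying spectral expansions along eigenvectors of $A(0)$ and $A(T)$. Since Proposition \ref{prop:adjoint_APS} guarantees that $D_{\APS}$ is Fredholm and Theorem \ref{thm:Riem_index_sfl} guarantees that $\tilde D$ is Fredholm, the two indices are defined and equal:
\[
\ind(D_{\APS}) = \ind(\tilde D).
\]

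The second step is to apply Theorem \ref{thm:Riem_index_sfl} (the Atiyah--Wojciechowski version of the classical index-equals-spectral-flow formula on the line), which, again under invertibility of the endpoints, gives
\[
\ind(\tilde D) = \sfl(\tilde A).
\]
Finally, since $\tilde A$ is constant equal to $A(0)$ for $t\leq 0$ and constant equal to $A(T)$ for $t\geq T$, no spectral crossings occur outside $[0,T]$, so $\sfl(\tilde A) = \sfl(A)$. Combining these equalities yields $\ind(D_{\APS}) = \sfl(A)$, as desired.

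There is no real obstacle: the only things to verify are that the hypotheses of Proposition \ref{prop:Riem_APS_index} and Theorem \ref{thm:Riem_index_sfl} are precisely the invertibility assumption made in the corollary, which is automatic, and that $\sfl(\tilde A)=\sfl(A)$, which is immediate from the definition of spectral flow (the constant extensions on $(-\infty,0]$ and $[T,\infty)$ contribute nothing to any flow partition).
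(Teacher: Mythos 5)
Your proposal is correct and matches the paper's own argument: the corollary is obtained precisely by chaining Proposition \ref{prop:Riem_APS_index} with Theorem \ref{thm:Riem_index_sfl}. The only cosmetic difference is that you pass through $\sfl(\tilde A)$ and then note $\sfl(\tilde A)=\sfl(A)$, whereas the paper's statement of Theorem \ref{thm:Riem_index_sfl} already gives $\ind(\tilde D)=\sfl(A)$ directly; this does not change the substance.
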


Next, we will prove the equality $\ind(D_{\APS}) = \sfl(A)$ in general, by reducing to the special case with invertible endpoints, as follows. 
\begin{df}
Consider a smooth function $\chi\colon\R\to[0,1]$ such that $\chi\equiv1$ near $0$ and $\supp\chi\subset(-\varepsilon,\varepsilon)$ for some $\varepsilon<\frac12$. 
We define a family of compact operators $\{K(t)\}_{t\in\R}$ on $\mH$ by 
\[
K(t) := \chi(t) P_0(A(0))+\chi(T-t) P_0(A(T))
\]
Here $P_0(A(t))$ denotes the projection onto the kernel of $A(t)$. 
We then obtain a new family $\{B(t)\}_{t\in[0,T]}$ of unbounded self-adjoint operators on $\mH$ with constant domain $W$, given by 
\[
B(t) := A(t) + K(t) , \qquad t\in[0,T] . 
\]
We note that the family $\{B(t)\}_{t\in[0,T]}$ is again norm-continuous, and therefore satisfies Assumption \ref{set_Riem_APS}.
As above, we continuously extend $\{B(t)\}_{t\in[0,T]}$ to a family $\{\tilde B(t)\}_{t\in\R}$ on the real line. 
As in Definition \ref{df:Riem_D} and Proposition \ref{prop:adjoint_APS}, we then define the operators 
\begin{align*}
\tilde D' &:= \partial_t + \tilde B , \qquad \text{on } \Dom\tilde D' := \mW^{\oplus2} , \\
D'_{\APS} &:= \partial_t + B , \qquad \text{on } \Dom D'_{\APS} := \mW_{\APS}\oplus\mW_{\APS}^\dagger . 
\end{align*}
\end{df} 

Let us make a few observations. First of all, the family $\{K(t)\}_{t\in[0,T]}$ is chosen such that the operators $B(0)$ and $B(T)$ are invertible. 
Second, we note that, in our conventions of both the spectral flow and the APS boundary conditions, zero belongs to the positive spectrum. Since the operators $K(0)$ and $K(T)$ move the kernels of $A(0)$ and $A(T)$ (respectively) into the strictly positive spectrum of $B(0)$ and $B(T)$ (respectively), we have $P_{\geq0}(B(0)) = P_{\geq0}(A(0))$ and $P_{\geq0}(B(T)) = P_{\geq0}(A(T))$. 
Consequently, we find that replacing $A$ by $B$ does not affect the APS boundary conditions, and we have the equality 
\begin{align*}
\Dom D'_{\APS} &= \Dom D_{\APS} . 
\end{align*}

\begin{lemma}
\label{lem:B}
We have the equalities 
\begin{align*}
\sfl(B) &= \sfl(A) , & 
\ind(D'_{\APS}) &= \ind(D_{\APS}) . 
\end{align*}
\end{lemma}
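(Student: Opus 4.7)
The strategy is to interpolate between $A$ and $B$ by the one-parameter family $A_s := A + sK$, $s\in[0,1]$. Each $A_s$ is a norm-continuous family of self-adjoint Fredholm operators on $\mH$ with constant domain $W$, and the joint map $(s,t)\mapsto A_s(t)$ is continuous in the norm of $B(W,\mH)$, since $K$ is smooth and bounded in $t$.

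For the spectral flow equality, I would invoke the homotopy invariance of spectral flow applied to this two-parameter family: the spectral flow around the boundary of the rectangle $[0,1]\times[0,T]$ vanishes, which gives
\[
\sfl(A) - \sfl(B) = \sfl\bigl(s \mapsto A_s(0)\bigr) - \sfl\bigl(s \mapsto A_s(T)\bigr).
\]
Both right-hand-side contributions vanish: along $s\mapsto A_s(0) = A(0) + sP_0(A(0))$, the only eigenvalues that move are those in $\Ker A(0)$, which shift continuously from $0$ to $s$; choosing a single value $a>1$ with $a\notin\spec A(0)$ as a one-piece flow partition, the dimension of $\mH_{[0,a)}(A_s(0))$ is constant in $s$, so the corresponding spectral flow is zero. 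The analogous argument applies at $t=T$. Hence $\sfl(A) = \sfl(B)$.

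For the index equality, I would show that the path $s\mapsto (D_s)_{\APS}$, where $D_s := \partial_t + A_s$ is equipped with the APS boundary conditions of $A_s$, is a norm-continuous path of Fredholm operators on a common domain. The key first step is that the APS boundary conditions are independent of $s$: the perturbation $sP_0(A(0))$ moves $\Ker A(0)$ into the strictly positive spectrum of $A_s(0)$ for $s>0$, so $P_{\geq 0}(A_s(0)) = P_{\geq 0}(A(0))$ for all $s\in[0,1]$, and similarly at $t=T$. Therefore $\Dom(D_s)_{\APS} = \Dom D_{\APS}$. Each $A_s$ satisfies Assumption \ref{set_Riem_APS}, so by Proposition \ref{prop:adjoint_APS} each $(D_s)_{\APS}$ is Fredholm. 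Since $(D_s)_{\APS}-(D_0)_{\APS} = sK$ is a bounded operator on $L^2([0,T],\mH)$, the path is norm-continuous in the operator norm on the fixed domain, and local constancy of the Fredholm index yields $\ind D_{\APS} = \ind D'_{\APS}$.

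The main obstacle is making the rectangular homotopy invariance of spectral flow rigorous within Phillips' flow-partition framework, which is stated there only for one-parameter paths. A cleaner alternative would be to exploit that $A = B$ on the middle interval $[\varepsilon, T-\varepsilon]$: choosing a common flow partition with all intermediate points in this interval reduces the equality $\sfl(A) = \sfl(B)$ to the two end intervals $[0,\varepsilon]$ and $[T-\varepsilon,T]$, on each of which the kernel-shifting computation above directly shows that the local spectral flow contributions of $A$ and $B$ agree.
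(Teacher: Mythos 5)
Your argument is essentially the same as the paper's, with two minor variants. For the spectral flow, the paper uses exactly the straight-line homotopy $B_s := A + sK$ and invokes \cite[Cor.~3.4]{SW19}, which is precisely the ``rectangular homotopy invariance'' formula you state (and were worried about making rigorous within Phillips' framework) --- so your concern is resolved by citing that reference rather than exploiting $A=B$ on $[\varepsilon,T-\varepsilon]$; your verification that the endpoint spectral flows vanish via constancy of $P_{\geq0}(A_s(0))$ and $P_{\geq0}(A_s(T))$ matches the paper's check. For the index equality, you argue via norm-continuity of the path $s\mapsto(D_s)_{\APS}$ in $B(\Dom D_{\APS},L^2)$ and local constancy of the Fredholm index, whereas the paper notes that $D'_{\APS}-D_{\APS}$ is a bounded perturbation of the operator $D_{\APS}$, which has compact resolvents by Proposition \ref{prop:adjoint_APS}, hence a relatively compact perturbation; both arguments are valid here since Proposition \ref{prop:adjoint_APS} already supplies Fredholmness for every $s$, and the outcome is the same.
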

\begin{proof}
We first prove the equality $\sfl(B) = \sfl(A)$. 
Since $K(t)$ is a family of compact operators, we can consider the straight-line homotopy $B_s := A + s K = \{ A(t) + s K(t) \}_{t\in[0,1]}$ for $s\in[0,1]$. 
It then follows from \cite[Cor.\ 3.4]{SW19} that $\sfl(B) = \sfl(A)$, if the spectral flows $\sfl\big( \{B_s(0)\}_{s\in[0,1]} \big)$ and $\sfl\big( \{B_s(T)\}_{s\in[0,1]} \big)$ are both identically zero. That the latter condition is satisfied can be 
checked directly, using that the spectral projections $P_{\geq0}(B_s(0))$ and $P_{\geq0}(B_s(T))$ are constant. 

Regarding the second equality, we recall from Proposition \ref{prop:adjoint_APS} that $D_{\APS}$ and $D'_{\APS}$ are Fredholm. 
We have already seen that $D'_{\APS}$ and $D_{\APS}$ have the same APS boundary conditions and therefore the same domain. 
Since the difference $D'_{\APS}-D_{\APS}$ is bounded and $D_{\APS}$ has compact resolvents by Proposition \ref{prop:adjoint_APS}, we see that $D'_{\APS}$ is a relatively compact perturbation of $D_{\APS}$, and therefore the index is the same. 
\end{proof}

\begin{thm}
\label{thm:Riem_APS_index_sfl}
We have the equality
\[
\ind(D_{\APS}) = \sfl(A) . 
\]
\end{thm}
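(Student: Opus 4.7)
The plan is to reduce to the already-handled case of invertible endpoints via the perturbation $B = A + K$ introduced just before the theorem. By construction, $K(0) = P_0(A(0))$ and $K(T) = P_0(A(T))$, so $B(0) = A(0) + P_0(A(0))$ and $B(T) = A(T) + P_0(A(T))$ are invertible: on $\Ker A(t)$ they act as the identity, and on the orthogonal complement they coincide with $A(t)$, which is already invertible there. Hence the family $B$ satisfies the hypotheses of Corollary \ref{coro:Riem_APS_index_sfl_inv}.

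Given this, I would apply Corollary \ref{coro:Riem_APS_index_sfl_inv} to the perturbed family $B$ and its associated operator $D'_{\APS}$, which yields
\[
\ind(D'_{\APS}) = \sfl(B).
\]
Then I would invoke Lemma \ref{lem:B}, which provides the two key equalities $\sfl(B) = \sfl(A)$ and $\ind(D'_{\APS}) = \ind(D_{\APS})$. Chaining these three identities gives the desired equality $\ind(D_{\APS}) = \sfl(A)$.

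There is essentially no obstacle left at this stage: the theorem is the payoff of the preparatory work. All nontrivial content has been absorbed into Corollary \ref{coro:Riem_APS_index_sfl_inv} (the invertible-endpoint case, itself combining Proposition \ref{prop:Riem_APS_index} with the known identity of Theorem \ref{thm:Riem_index_sfl}) and into Lemma \ref{lem:B} (the invariance of both sides under the compact perturbation $K$, which relies on \cite[Cor.\ 3.4]{SW19} on the spectral-flow side and on the compact-resolvent property of $D_{\APS}$ from Proposition \ref{prop:adjoint_APS} on the index side). The proof itself is therefore a short three-line synthesis rather than a new argument.
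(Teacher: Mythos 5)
Your proposal is correct and matches the paper's proof exactly: the paper likewise reduces to Corollary \ref{coro:Riem_APS_index_sfl_inv} via the perturbation $B = A + K$ and chains the equalities from Lemma \ref{lem:B} to obtain $\ind(D_{\APS}) = \ind(D'_{\APS}) = \sfl(B) = \sfl(A)$.
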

\begin{proof}
Combining the equalities from Lemma \ref{lem:B} with Corollary \ref{coro:Riem_APS_index_sfl_inv}, we obtain the sequence of equalities 
\[
\ind(D_{\APS}) = \ind(D'_{\APS}) = \sfl(B) = \sfl(A) . 
\qedhere
\]
\end{proof}

\section{The `Lorentzian' APS-index}
\label{sec:APS-ind}

In this section, we strengthen Assumption \ref{set_sf} by assuming that $A(t)$ is not only norm-continuous but in fact is strongly continuously differentiable. Thus throughout this section we consider the following setting. 
\begin{assumption}
\label{set}
Let $\mH$ be a separable Hilbert space, let $W\subseteq \mH$ be a dense subspace, and let $\famop$ be a family of unbounded self-adjoint Fredholm operators on $\mH$ with constant domain $W$. 
We equip $W$ with the graph norm of $A(0)$. 
We \emph{assume} that the family $A\colon[0,T]\to B(W,\mH)$ is strongly continuously differentiable.
\end{assumption}

\begin{df}
For $s<t\in [0,T]$, let $D|_{[s,t]}$ denote the closure in $L^2([s,t],\mH)$ of
$$\ddt-iA$$
with initial domain $C^1([s,t],W)$.
Define $D:=D|_{[0,T]}$.
\end{df}

\subsection{The Evolution Operator}
\label{sec:evol}

\begin{thm}[{\cite[Ch.\ 5]{Pazy83}}]
\label{evop}
There is a family of bounded operators $Q(t,s)\colon \mH\rightarrow \mH$ for $s,t\in[0,T]$, satisfying the following conditions (for all $s,t,r\in [0,T]$):
\begin{enumerate}
\item \label{e1} $Q(s,s)=\Id$; 
\item \label{e2} $Q(t,s)Q(s,r)=Q(t,r)$; 
\item \label{e3} $Q(t,s)$ is an isometry (of $\mH$); 
\item \label{e4} $Q(t,s)(W)\subseteq W$ and $Q(t,s)\colon W\rightarrow W$ is bounded;
\item \label{e5} $Q$ is strongly continuously differentiable in $B(W,\mH)$ with derivatives 
$$\pdt Q(t,s)=iA(t)Q(t,s)$$ and 
$$\pds Q(t,s)=-Q(t,s)iA(s).$$
\item \label{e6} $Q(t,s)x$ (as a function of $s$ and $t$) is continuous in $\mH$ for $x\in \mH$ and continuous in $W$ for $x\in W$.
\end{enumerate}
\end{thm}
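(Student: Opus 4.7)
The plan is to invoke Pazy's theory of evolution systems for time-dependent generators (\cite[Ch.~5]{Pazy83}). Since each $A(t)$ is self-adjoint, $iA(t)$ is skew-adjoint, so by Stone's theorem it generates a unitary group $\{e^{irA(t)}\}_{r\in\R}$ on $\mH$. In particular the family $\{iA(t)\}_{t\in[0,T]}$ is stable with stability constants $M=1$, $\omega=0$. Combined with the constant domain $W$ and the strong continuous differentiability of $A\colon[0,T]\to B(W,\mH)$ from Assumption \ref{set} --- which is precisely the $C^1$ strong differentiability hypothesis required by Pazy --- one obtains a unique evolution system $\{Q(t,s)\}_{0\le s\le t\le T}$ satisfying (\ref{e1}), (\ref{e2}), the invariance (\ref{e4}), the forward derivative $\pdt Q(t,s)=iA(t)Q(t,s)$ from (\ref{e5}), and the strong continuity in (\ref{e6}), in the range $s\le t$.

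To obtain the isometry property (\ref{e3}), I would compute directly: for $x\in W$, using Proposition \ref{prop:diff}(\ref{prodiff}) and the self-adjointness of $A(t)$,
\[
\ddt \norm{Q(t,s)x}^2 = 2\Real \l iA(t)Q(t,s)x,\, Q(t,s)x\r = 0,
\]
since $\l A(t)y,y\r\in\R$. Hence $Q(t,s)$ is isometric on the dense subspace $W$, and extends by continuity to an isometry of $\mH$. To extend $Q$ to the regime $s>t$, note that $\{-iA(t)\}_{t\in[0,T]}$ satisfies the same hypotheses; applying Pazy to the time-reversed family produces a backward evolution system $\{\tilde Q(t,s)\}_{0\le t\le s\le T}$. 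One then verifies, using the uniqueness of solutions to the underlying Cauchy problem, that $\tilde Q(s,t)Q(t,s)=\Id_W$ and $Q(t,s)\tilde Q(s,t)=\Id_W$, so setting $Q(t,s):=\tilde Q(t,s)$ for $t<s$ is forced and makes each $Q(t,s)$ a surjective isometry. This promotes (\ref{e1}), (\ref{e2}), (\ref{e4}), (\ref{e6}) to the full range $s,t\in[0,T]$ and yields (\ref{e3}).

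The remaining derivative $\pds Q(t,s)=-Q(t,s)iA(s)$ in (\ref{e5}) is then extracted from the cocycle identity $Q(t,s)Q(s,r)=Q(t,r)$ (property (\ref{e2})): fixing $t$ and $r$ and differentiating in $s$ using Proposition \ref{prop:diff}(\ref{prodiff}) gives
\[
\big(\pds Q(t,s)\big) Q(s,r) + Q(t,s)\, iA(s)\, Q(s,r) = 0,
\]
and specialising to $r=s$ together with (\ref{e1}) yields the claimed formula. Strong continuity of this derivative in $B(W,\mH)$ follows from the strong continuity of $Q(t,s)$ as a map $W\to W$ (property (\ref{e4}), combined with the uniform boundedness guaranteed by Banach--Steinhaus) and the strong continuity of $A$.

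The main obstacle will be the bookkeeping at the interface with Pazy: confirming that strong continuous differentiability of $A\colon[0,T]\to B(W,\mH)$ in the sense of Section \ref{sec:families} supplies exactly the hypothesis needed for the existence theorem in \cite[Ch.~5]{Pazy83}, and carefully handling the passage to the backward evolution so that properties (\ref{e1})--(\ref{e2}) remain consistent once the ordering $s\le t$ is dropped. Once those points are settled, (\ref{e3})--(\ref{e6}) follow essentially formally from Pazy's theorem together with the skew-adjointness of $iA(t)$.
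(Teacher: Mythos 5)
Your proposal follows essentially the same route as the paper's proof: invoke Pazy's evolution-system machinery for the forward direction $s\le t$, obtain the isometry property via $\ddt\norm{Q(t,s)x}^2=2\Real\<iA(t)Q(t,s)x,Q(t,s)x\>=0$, and then extend to $s>t$ by applying the same machinery to the time-reversed family and checking that the forward and backward operators are mutually inverse. Two small remarks, both on the margins rather than the substance. First, the paper records that Pazy's Theorem 5.4.8 already delivers both derivative formulas $\pdt^+ Q(t,s)=iA(t)Q(t,s)$ \emph{and} $\pds Q(t,s)=-Q(t,s)iA(s)$ for $t\ge s$; your re-derivation of the $s$-derivative by differentiating the cocycle identity $Q(t,s)Q(s,r)=Q(t,r)$ is therefore not needed, and as written it is circular: to invoke Proposition \ref{prop:diff}.\ref{prodiff} you must already know that $s\mapsto Q(t,s)$ is strongly differentiable in $B(W,\mH)$, which is exactly what you are trying to extract. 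Taking the $s$-derivative directly from Pazy (as the paper does) and then extending via the time-reversed family avoids this. Second, for the extension step the paper is slightly more explicit: rather than appealing abstractly to ``uniqueness of solutions of the Cauchy problem,'' it sets $Q(t,s):=Q'(T-t,T-s)$ where $Q'$ is the forward evolution operator of $-A(T-\cdot)$, and verifies mutual invertibility by computing $\pdt^+\big(Q(s,t)Q(t,s)\big)=0$ directly; your route reaches the same conclusion but could be made more self-contained by spelling out a similar computation.
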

\begin{proof}
Most of the statement is proven in \cite[Ch.\ 5]{Pazy83} for a more general situation (without assuming $A(t)$ to be self-adjoint).
To be precise, \cite[Ch.\ 5, Theorem 4.8]{Pazy83} provides the operator $Q(t,s)$ for $t\geq s$, 
satisfying for all $t\geq s\geq r$ the conditions \ref{e1}, \ref{e2}, \ref{e4}, and \ref{e6} (for \ref{e4} we note that the boundedness of $Q(t,s)\colon W\rightarrow W$ follows from the inclusion $Q(t,s)(W)\subseteq W$ and the closed graph theorem), as well as the equalities $\pdt^+ Q(t,s)=iA(t)Q(t,s)$ and $\pds Q(t,s)=-Q(t,s)iA(s)$. 
For $x\in W$, the calculation
$$\pdt^+ ||Q(t,s)x||^2=2\Real(\<Q(t,s)x,iA(t)Q(t,s)x\>)=0$$
together with $Q(s,s)=\Id$ shows that $Q(t,s)$ is an isometry, so in fact \ref{e3} is also satisfied. 

Similarly, for $s\geq t$, we obtain the operator $Q'(t,s)$ associated to the family $-A(T-\cdot)$. 
Then the operator $Q(t,s):=Q'(T-t,T-s)$ satisfies the same conditions for all $r\geq s\geq t$. 
Since both definitions agree at $t=s$, we get a strongly continuous family $Q(t,s)$ for all $t$ and $s$.
For $s\leq t$, we compute (using Proposition \ref{prop:diff})
$$\pdt^+Q(s,t)Q(t,s)=-Q(s,t)iA(T-(T-t))Q(t,s)+Q(s,t)iA(t)Q(t,s)=0.$$
Thus $Q(s,t)$ and $Q(t,s)$ are mutually inverse (as this holds at $t=s$), and we find that \ref{e2} is in fact satisfied for arbitrary $s,t,r$.
Finally, as 
$$\pdt^\pm Q(t,s)=\left.\frac{\partial}{\partial r}^\pm Q(r,t)\right|_{r=t}Q(t,s)=iA(t)Q(t,s),$$
we get the $t$-derivatives in \ref{e5}, and we note that $Q(t,s)$ is strongly continuously differentiable in $B(W,\mH)$ because $A(t)Q(t,s)$ is strongly continuous in $B(W,\mH)$. 
\end{proof}

We will refer to $Q$ as the \emph{evolution operator}. The unitary operator $Q(t,s)$ can be thought of as evolving the initial data at time $s$ to the final data at time $t$, subject to the equation $Df=0$. More precisely, the function $f(t):=Q(t,s)x$ is the unique solution to the equations
\begin{align*}
Df&=0, \qquad f(s)=x, 
\end{align*}
When replacing $Df=0$ with $Df=g$ for some $g\in\LH$, the equations still have a unique solution:
\begin{thm}[Well-posedness of the Cauchy problem]
\label{thm:Cauchy}
The domain $\Dom(D)$ is a subspace of $C([0,T],\mH)$ (with maximum norm) with bounded inclusion. 
For all $t\in [0,T]$ the map
$$D\oplus \ev_t \colon  \Dom(D)\rightarrow \LH \oplus \mH$$
is an isomorphism, where $\ev_t\colon C([0,T],\mH)\to\mH$ denotes evaluation at $t$. 
\end{thm}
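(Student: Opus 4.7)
The plan is to use the evolution operator $Q$ from Theorem \ref{evop} to write down a Duhamel-type formula that serves simultaneously as an a priori estimate on elements of $\Dom(D)$ and as the explicit inverse of $D \oplus \ev_t$. For $f \in C^1([0,T],W)$, I would first differentiate $r \mapsto Q(s,r)f(r)$ using Proposition \ref{prop:diff} together with property \ref{e5} of $Q$,
$$\frac{d}{dr}\bigl[Q(s,r)f(r)\bigr] = -Q(s,r)iA(r)f(r) + Q(s,r)f'(r) = Q(s,r)(Df)(r),$$
and then integrate from $s$ to $t$ and apply the isometry $Q(t,s)$ to obtain the key identity
\begin{equation}\label{eq:duhamel}
f(t) = Q(t,s)f(s) + \int_s^t Q(t,r)(Df)(r)\, dr.
\end{equation}

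Averaging \eqref{eq:duhamel} over $s \in [0,T]$ and using that $Q$ is an isometry gives a uniform bound
$$\sup_{t \in [0,T]} \|f(t)\|_\mH \leq T^{-1/2}\|f\|_{\LH} + T^{1/2}\|Df\|_{\LH}$$
on $C^1([0,T],W)$. Since $\Dom(D)$ is the graph-norm closure of this space and the right-hand side of \eqref{eq:duhamel} is manifestly continuous in $t$ (by strong continuity of $Q$ and dominated convergence), both the inclusion into $C([0,T],\mH)$ and the identity \eqref{eq:duhamel} extend to all $f \in \Dom(D)$; in particular, $\ev_t$ is well-defined and bounded. Injectivity of $D \oplus \ev_t$ is then immediate from \eqref{eq:duhamel}: if $Df=0$ and $f(t)=0$, then $f \equiv 0$. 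For surjectivity, given $(g,x) \in \LH \oplus \mH$, I would set
$$\Phi(g,x)(r) := Q(r,t)x + \int_t^r Q(r,\tau)g(\tau)\, d\tau.$$
For smooth $g$, property \ref{e5} of $Q$ together with Leibniz's rule yields $D\Phi(g,x) = g$ and $\ev_t\Phi(g,x) = x$ directly; for general $g \in \LH$, I approximate by smooth $g_n$, note that the same uniform estimate forces $\Phi(g_n,x) \to \Phi(g,x)$ in the graph norm, and invoke closedness of $D$.

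The main obstacle is the circularity in the setup: the evaluation $\ev_t$ only makes sense once we know that elements of $\Dom(D)$ are continuous functions, so the a priori estimate must be established first on the dense subspace $C^1([0,T],W)$ (where continuity is automatic) and only afterwards transported to the closure. A secondary technical point is justifying differentiation under the integral in the computation of $D\Phi(g,x)$ when $g$ is merely square-integrable, which is why I treat smooth $g$ first and then invoke closedness of $D$. Once these two points are handled, boundedness of $\Phi$ follows from the same estimate used in Step 2, so $D \oplus \ev_t$ and $\Phi$ are mutually inverse bounded maps, giving the desired isomorphism.
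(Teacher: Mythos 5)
Your proposal is correct and follows essentially the same strategy as the paper: the explicit inverse $\Phi(g,x)(r)=Q(r,t)x+\int_t^r Q(r,\tau)g(\tau)\,d\tau$ is exactly the paper's map $F_t$, and the Duhamel identity you derive is the same one the paper uses implicitly to verify that $F_s\circ(D\oplus\ev_s)=\Id$ on $C^1([0,T],W)$. The one genuine point of divergence is the a priori estimate that gives the bounded inclusion $\Dom(D)\hookrightarrow C([0,T],\mH)$. You obtain it by averaging the Duhamel formula over $s$ and using that $Q(t,s)$ is an isometry, which is clean but already requires the full machinery of Theorem \ref{evop}. The paper instead works directly with the energy identity: for $f\in C^1([0,T],W)$ one has $\frac{d}{dr}\|f(r)\|^2=2\Real\langle f(r),Df(r)\rangle$ because $\Real\langle f(r),iA(r)f(r)\rangle=0$ by self-adjointness of $A(r)$; integrating this twice gives the same $\sup_t\|f(t)\|$ bound without invoking $Q$ at all. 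The two estimates are of the same strength, but the paper's route is marginally more elementary and keeps the first statement independent of the evolution-operator construction.

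One small imprecision to tighten: when you verify $D\Phi(g,x)=g$ and $\ev_t\Phi(g,x)=x$ "directly" for smooth $g$, the differentiation in $W$ also requires $x\in W$ (since $r\mapsto Q(r,t)x$ is only $C^1$ into $W$ when $x\in W$). So the dense subspace on which the identities hold by direct computation is $C^1([0,T],W)\oplus W$, not $\{\text{smooth }g\}\oplus\mH$; approximating both $g$ in $\LH$ and $x$ in $\mH$ and then invoking closedness of $D$ together with your boundedness estimate for $\Phi$ closes the argument. This is a routine fix and does not affect the validity of the overall proof.
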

\begin{proof}
For the first statement, let $f\in C^1([0,T],W)$. Using that $\Real\big(\<f(t),iA(t)f(t)\>\big)=0$, we have
\[
T\|{f(t)}\|^2-\|f\|_{L^2}^2 
= \int\limits_0^T \int\limits_s^t \frac{d}{dr} \|f(r)\|^2 dr ds 
= \int\limits_0^T \int\limits_s^t 2\Real\big(\<f(r),Df(r)\>\big) dr ds . 
\]
This allows us to estimate
$$
T\|{f(t)}\|^2-\|f\|_{L^2}^2
\leq \int\limits_0^T 2 \|f\|_{L^2} \|Df\|_{L^2} ds 
\leq T\|f\|_{D}^2 ,
$$
which ensures that the inclusion $C^1([0,T],W)\into C([0,T],\mH)$ extends to a bounded inclusion $\Dom(D)\into C([0,T],\mH)$.

The second statement follows by checking that the map $F_s\colon\LH\oplus\mH\to\Dom D$, given for $g\in\LH$ and $x\in\mH$ by 
$$F_s(g,x)(t):=Q(t,s)x+\int\limits_s^tQ(t,r)g(r)dr,$$
is an inverse for $D\oplus\ev_s$. 
Indeed, an explicit computation shows that $F_s\circ (D\oplus \ev_s)$ and $(D\oplus \ev_s)\circ F_s$ are the identity on $C^1([0,T],W)$ and $C^1([0,T],W)\oplus\mH$ respectively. For $(f,x)$ in the latter space, we can then estimate
\[
\|F_s(f,x)\|_{L^2}^2 
\leq \|x\|_{L^2}^2 + \left\| \int_s^{\cdot}\|f(r)\|dr\right\|_{L^2}^2 
\leq T \|x\|^2 + T \|f\|_{L^1}^2 
\leq T \|x\|^2 + C T \|f\|_{L^2}^2 ,
\]
for some $C>0$. It follows that 
\[
\|F_s(f,x)\|_D^2 = \|F_s(f,x)\|_{L^2}^2 + \|f\|_{L^2}^2 
\leq (1+T+CT) (\|f\|_{L^2}^2+\|x\|^2) . 
\]
Thus $F_s$ maps continuously into $\Dom(D)$, whence the two compositions are the identity everywhere.
\end{proof}
Using the above theorem, we can rewrite the evolution operator in a concise way that highlights its connection to the Cauchy problem: 
\begin{align}
Q(t,s)x=\ev_t \circ (D\oplus \ev_s)^{-1}(0,x).
\end{align}

\subsection{The APS-index and spectral projections}
\label{sec:APS-ind_proj}
In the following, we will use the splitting of $\mH$ in positive and negative spectral subspaces of $A(t)$, in order to define APS boundary conditions. 
For any $t\in[0,T]$, we consider (as before) the spectral projections 
\begin{align*} 
P_{<0}(t)&:=P_{(-\infty,0)}(t) , & 
P_{\geq 0}(t)&:=P_{[0,\infty)}(t)=\Id-P_{<0}(t),
\end{align*}
and the corresponding subspaces 
\begin{align*}
\mH_{<0}(t)&:=\Ran(P_{<0}(t)) , & 
\mH_{\geq 0}(t)&:=\Ran(P_{\geq 0}(t)) . 
\end{align*}

\begin{df}
For $s<t\in[0,T]$, let $(D|_{[s,t]})_{\APS}$ be the restriction of $D|_{[s,t]}$ to the domain 
$$\Dom\big((D|_{[s,t]})_{\APS}\big):=\big\{f\in \Dom(D) : f(s)\in \mH_{<0}(s),f(t)\in \mH_{\geq 0}(t)\big\}.$$
We will write 
$$D_{\APS}:=(D|_{[0,T]})_{\APS}.$$
\end{df}

We will relate the index of $D_{\APS}$ to the index of a pair of spectral projections. For this purpose, we consider the \emph{evolved spectral projections} defined as 
$$\hat P_{<a}(t):=Q(0,t)P_{<a}(t)Q(t,0).$$
Let $\hat P_{<a}(t)_r$ be the restriction of $\hat P_{<a}(t)$ to $\mH_{<0}(0)$ with codomain $Q(0,t)\mH_{<a}(t)$: 
$$\hat P_{<a}(t)_r:=\hat P_{<a}(t)\big|_{{\mH_{<0}(0)}\rightarrow{Q(0,t)\mH_{<a}(t)}}.$$
We note that $\hat P_{<a}(t)$ is the projection onto $Q(0,t)\mH_{<a}(t)$, and that (by construction) $\hat P_{<a}(t)_r$ is Fredholm with index $k$ if and only if the pair $(P_{<0}(0),\hat P_{<a}(t))$ is Fredholm with index $k$. 
The following result is partly based on the arguments from \cite[\S3]{BS19}. 
\begin{thm}
\label{inD}
$D_{\APS}$ and $\hat P_{<0}(T)_r$ have isomorphic kernel and cokernel. In particular, $D_{\APS}$ is Fredholm with index $k$ if and only if $(P_{<0}(0),\hat P_{<0}(T))$ is a Fredholm pair with index $k$.
\end{thm}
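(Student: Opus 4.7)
The plan is to exploit the well-posedness of the Cauchy problem (Theorem~\ref{thm:Cauchy}) to parametrise both $\Ker D_{\APS}$ and $\LH/\Ran D_{\APS}$ by explicit subspaces of $\mH$, and then recognise these as $\Ker\hat P_{<0}(T)_r$ and $\Coker\hat P_{<0}(T)_r$ respectively.

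For the kernel, every element of $\Ker D$ is uniquely of the form $f(t)=Q(t,0)f(0)$, so imposing the APS boundary conditions reduces to $f(0)\in\mH_{<0}(0)$ and $Q(T,0)f(0)\in\mH_{\geq 0}(T)$. On the other side, an $x\in\mH_{<0}(0)$ satisfies $\hat P_{<0}(T)_r x=0$ exactly when $P_{<0}(T)Q(T,0)x=0$, i.e.\ when $Q(T,0)x\in\mH_{\geq 0}(T)$, because $Q(0,T)$ is unitary. Thus the evaluation $\ev_0$ furnishes an isomorphism $\Ker D_{\APS}\xrightarrow{\simeq}\Ker\hat P_{<0}(T)_r$.

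For the cokernel, I would use the explicit inverse $F_0$ of $D\oplus\ev_0$ from the proof of Theorem~\ref{thm:Cauchy}: any $g\in\LH$ equals $Df$ where $f(t)=Q(t,0)f(0)+\int_0^t Q(t,r)g(r)\,dr$, with $f(0)\in\mH$ at our disposal. Thus $g\in\Ran D_{\APS}$ iff we can choose $f(0)\in\mH_{<0}(0)$ such that $f(T)\in\mH_{\geq0}(T)$, i.e.\ such that $P_{<0}(T)\bigl(Q(T,0)f(0)+\int_0^T Q(T,r)g(r)\,dr\bigr)=0$. Applying $Q(0,T)$, this suggests defining
\[
\Phi\colon\LH\longrightarrow Q(0,T)\mH_{<0}(T),\qquad
\Phi(g):=Q(0,T)P_{<0}(T)\int_0^T Q(T,r)g(r)\,dr.
\]
I would then verify three things: (i) $\Phi$ is surjective, by checking that $g(r):=\tfrac1T Q(r,0)y$ satisfies $\Phi(g)=y$ for any $y\in Q(0,T)\mH_{<0}(T)$ (using the cocycle property and that $P_{<0}(T)$ is the identity on $\mH_{<0}(T)$); (ii) if $g=Df$ with $f\in\Dom D_{\APS}$, then substituting the explicit formula for $f(T)$ yields $\Phi(g)=-\hat P_{<0}(T)_r f(0)\in\Ran\hat P_{<0}(T)_r$; and (iii) conversely, if $\Phi(g)=\hat P_{<0}(T)_r x$ for some $x\in\mH_{<0}(0)$, then the obstruction to arranging the APS boundary condition at $T$ vanishes, so taking $f(0)=-x$ exhibits $g\in\Ran D_{\APS}$. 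Combining (i)--(iii) gives an induced isomorphism $\LH/\Ran D_{\APS}\xrightarrow{\simeq}Q(0,T)\mH_{<0}(T)/\Ran\hat P_{<0}(T)_r=\Coker\hat P_{<0}(T)_r$.

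I do not expect any deep obstacle; the main care required is the bookkeeping in step (ii), which must use \emph{both} APS conditions $f(0)\in\mH_{<0}(0)$ and $f(T)\in\mH_{\geq0}(T)$, and making sure $\Phi$ takes values in $Q(0,T)\mH_{<0}(T)$ (which is immediate from the factor $Q(0,T)P_{<0}(T)$) and is bounded (which uses the isometry property of $Q$ and Cauchy--Schwarz on the integral). Once the kernel and cokernel isomorphisms are in place, the Fredholm statement and the equality of indices follow at once from the definitions, noting that $\hat P_{<0}(T)_r$ and $(P_{<0}(0),\hat P_{<0}(T))$ have the same kernel and cokernel by construction.
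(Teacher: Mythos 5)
Your proposal is correct and follows essentially the same route as the paper: the kernel argument via $\ev_0$ is identical, and your cokernel argument with $\Phi=Q(0,T)P_{<0}(T)\circ E$ (where $E(g)=\int_0^T Q(T,r)g(r)\,dr$) is the same idea as the paper's, which first characterises $\Ran(D_{\APS})=E^{-1}(V)$ with $V=P_{<0}(T)Q(T,0)\mH_{<0}(0)\oplus\mH_{\geq0}(T)$ and then applies $Q(0,T)P_{<0}(T)$ at the end. The only cosmetic difference is that you establish surjectivity of $\Phi$ by an explicit preimage $g(r)=\tfrac1T Q(r,0)y$, whereas the paper uses the cutoff $\phi(t)=t/T$ to show $E$ itself is already onto.
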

\begin{remark}
By replacing $A$ by $A|_{[0,t]}$, we obtain for any $t\in[0,T]$ that $(D|_{[0,t]})_{\APS}$ is Fredholm with index $k$ if and only if $(P_{<0}(0),\hat P_{<0}(t))$ is Fredholm with index $k$.
\end{remark}
\begin{proof}
We have
\begin{align*}
\Ker(D_{\APS})&=\{f\in \Dom(D) : Df=0,f(0)\in \mH_{<0}(0), f(T)\in \mH_{\geq 0}(T)\}\\
&\cong\{f(0)\in \mH_{<0}(0) : Q(T,0)f(0)\in \mH_{\geq 0}(T)\}\\
&=\mH_{<0}(0)\cap Q(0,T)\mH_{\geq 0}(T)\\
&=\Ker(\hat P_{<0}(T)_r),
\end{align*}
where in the second line, we use that $Df=0$ implies $f(t)=Q(t,0)f(0)$, so $f\mapsto f(0)$ is an isomorphism.

For $g\in \LH$ define
$$E(g):=\ev_T \circ (D\oplus \ev_0)^{-1}(g,0).$$
Note that 
$$\ev_T \circ (D\oplus \ev_0)^{-1}(g,x)=E(g)+Q(T,0)x.$$

We will first show that $E\colon \LH \rightarrow \mH$ is surjective. 
Thus, we need to show that functions in $\Dom(D)$ that vanish at $0$ can take any value at $T$.  For $z\in \mH$ choose $f\in \Dom(D)$ with $f(T)=z$ (a possible choice is  $f(t)=Q(t,T)z$) and let
$\phi(t):=\frac{t}{T}$. 
Since multiplication with $\phi$ preserves $\Dom(D)$, 
we  have $\phi f \in \Dom(D)$, with $\phi(0)f(0)=0$ and $\phi(T)f(T)=z$. We get 
$$E(D(\phi f))=\ev_T\circ(D\oplus \ev_0)^{-1}(D(\phi f),0)=\ev_T(\phi f)=z.$$
As $z$ was arbitrary, $E$ is surjective.

To determine the cokernel of $D_{\APS}$, we need to characterise its range. 
For $g\in \LH$, we have the following chain of equivalences:
\begin{align*}
g\in \Ran(D_{\APS})&\Leftrightarrow \exists f\in \Dom(D): f(0)\in \mH_{<0}(0)\wedge f(T) \in \mH_{\geq 0}(T) \wedge Df=g\\
&\Leftrightarrow \exists f(0)\in \mH_{<0}(0): \ev_T(D\oplus \ev_0)^{-1}(g,f(0))\in \mH_{\geq 0}(T)\\
&\Leftrightarrow\exists f(0)\in \mH_{<0}(0):\exists z\in \mH_{\geq 0}(T): E(g)+Q(T,0)(f(0))=z\\
&\Leftrightarrow\exists x\in Q(T,0)\mH_{<0}(0):\exists z\in \mH_{\geq 0}(T): E(g)=z-x\\
&\Leftrightarrow E(g)\in Q(T,0)\mH_{<0}(0)+ \mH_{\geq 0}(T)
\end{align*}
Defining 
$$V:=Q(T,0)\mH_{<0}(0)+ \mH_{\geq 0}(T)=P_{<0}(T)Q(T,0)\mH_{<0}(0)+ \mH_{\geq 0}(T),$$
(with the latter sum being orthogonal), we get
$$\Ran(D_{\APS})=\{g\in\LH : E(g)\in V\}=E^{-1}(V).$$
In particular, this also implies that $\Ker(E) = E^{-1}(\{0\}) \subset \Ran(D_{\APS})$. 
By the surjectivity of $E$, we therefore obtain the isomorphism 
\[
\LH/\Ran(D_{\APS} ) \cong \mH/V . 
\]
We can now conclude
\begin{align*}
\Coker(D_{\APS})&=\LH/\Ran(D_{\APS} )\\
&\cong \mH/V\\
&\cong \mH_{<0}(T) / \big(P_{<0}(T)Q(T,0)\mH_{<0}(0)\big)\\
&\cong (Q(0,T)\mH_{<0}(T)) / \big(Q(0,T)P_{<0}(T)Q(T,0)\mH_{<0}(0)\big)\\
&=\Coker(\hat P_{<0}(T)_r) . 
\qedhere 
\end{align*}
\end{proof}

\subsection{APS-index and spectral flow}
\label{sec:APS-ind_sf}

We recall that the strongly continuously differentiable family $\famop$ is norm-con\-tin\-u\-ous by Lemma \ref{lem:diff}.\ref{diffbound}, so in particular the results from Section \ref{sec:sf} apply. 
In order to combine Theorems \ref{flowind} and \ref{inD}, we need to consider a new `evolved' family $\hat A\colon[0,T]\to B(W,\mH)$ given by 
$$\hat A(t):=Q(0,t)A(t)Q(t,0).$$
For every $t\in[0,T]$, $\hat A(t)$ is self-adjoint and Fredholm, with domain $W$ (as $Q(t,0)^{-1}(W)=W$). 
As functional calculus is equivariant under conjugation with isometries,
we find that the spectral projections of $\hat A(t)$ correspond precisely to the evolved spectral projections from subsection \ref{sec:APS-ind_proj}:
$$\chi_{(-\infty,a)}(\hat A(t))=Q(0,t)\chi_{(-\infty,a)}(A(t))Q(t,0)=\hat P_{<a}(t).$$
Before we can apply Theorem \ref{flowind} to $\hat A$, we need to ensure that $\hat A$ is again norm-continuous, and we will prove that it is in fact strongly continuously differentiable in $B(W,\mH)$. 

\begin{lemma}
\label{hatdiff}
$\hat A\colon  J\rightarrow B(W,\mH)$ is strongly continuously differentiable with derivative
$$\hat A'(t) = Q(0,t)A'(t)Q(t,0) . $$
\end{lemma}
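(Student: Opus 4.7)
My plan is to verify the formula for $\hat A'(t)$ via an integral identity, avoiding a direct product-rule argument. The obstacle to the latter is that $Q(\cdot,0)$ is differentiable only in $B(W,\mH)$ and not in $B(W,W)$, so neither of the obvious factorisations $\hat A(t)=(Q(0,t)A(t))\cdot Q(t,0)$ nor $Q(0,t)\cdot(A(t)Q(t,0))$ has two factors to which Proposition \ref{prop:diff}.\ref{prodiff} applies directly. A naive product-rule expansion of $\hat A'(t)$ produces three terms of which the outer two contain the composition $A(t)\cdot A(t)$ (not defined on $W$), and only the formal commutator identity $-iA\cdot A+A\cdot iA=0$ makes them cancel. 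I would make this cancellation rigorous by testing against vectors in $W$ and invoking self-adjointness of $A$.

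Fix $x,w\in W$ and set $u(r):=Q(r,0)x$ and $v(r):=Q(r,0)w$. By Theorem \ref{evop}.\ref{e4}, \ref{evop}.\ref{e5} and \ref{evop}.\ref{e6}, both $u$ and $v$ take values in $W$, are $\mH$-differentiable with $u'(r)=iA(r)u(r)$ and $v'(r)=iA(r)v(r)$, and are continuous as maps into $W$. Since $Q(0,r)=Q(r,0)^*$ we have $\<w,\hat A(r)x\>=\<v(r),A(r)u(r)\>$, and I would expand
\begin{align*}
\<v(r),A(r)u(r)\> - \<v(r_0),A(r_0)u(r_0)\>
&= \<v(r)-v(r_0),\, A(r)u(r)\> \\
&\quad + \<v(r_0),\, [A(r)-A(r_0)]u(r)\> \\
&\quad + \<A(r_0)v(r_0),\, u(r)-u(r_0)\>,
\end{align*}
where in the last summand self-adjointness of $A(r_0)$ has been used to move $A(r_0)$ onto the $W$-element $v(r_0)$. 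Dividing by $r-r_0$ and letting $r\to r_0$, the three terms tend respectively to $-i\<A(r_0)v(r_0),A(r_0)u(r_0)\>$, $\<v(r_0),A'(r_0)u(r_0)\>$ and $i\<A(r_0)v(r_0),A(r_0)u(r_0)\>$; for the middle limit one uses uniform boundedness of the difference quotients $(A(r)-A(r_0))/(r-r_0)$ in $B(W,\mH)$ (Banach--Steinhaus) together with $u(r)\to u(r_0)$ in $W$. The outer terms cancel, giving $\frac{d}{dr}\<v(r),A(r)u(r)\> = \<v(r),A'(r)u(r)\>$.

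Integrating this identity and using $\<v(r),A'(r)u(r)\>=\<w,Q(0,r)A'(r)Q(r,0)x\>$, I would obtain
$$\<w,\,\hat A(t)x-\hat A(s)x\> = \int_s^t \<w,\, Q(0,r)A'(r)Q(r,0)x\>\,dr$$
for all $w\in W$. The integrand $r\mapsto Q(0,r)A'(r)Q(r,0)x$ is $\mH$-continuous (by composing strong continuity of $Q(0,\cdot)\colon J\to B(\mH,\mH)$, of $A'\colon J\to B(W,\mH)$, and the $W$-continuity of $Q(\cdot,0)x$), so its Bochner integral exists and commutes with $\<w,\cdot\>$. Density of $W$ in $\mH$ then upgrades the identity to $\hat A(t)x-\hat A(s)x = \int_s^t Q(0,r)A'(r)Q(r,0)x\,dr$ in $\mH$ for all $x\in W$, and the fundamental theorem of calculus yields strong differentiability of $\hat A\colon J\to B(W,\mH)$ with derivative $Q(0,t)A'(t)Q(t,0)$; the strong continuity of this derivative in $B(W,\mH)$ is precisely the $\mH$-continuity of the integrand noted above.
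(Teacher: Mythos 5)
Your proof is correct, but it takes a genuinely different route from the paper. The paper sidesteps the ill\-defined $A(t)^2$ composite entirely by passing to resolvents: it writes $(\hat A(t)-i)^{-1}=Q(0,t)(A(t)-i)^{-1}Q(t,0)$, differentiates this three\-fold composite of \emph{bounded} operators using the tools set up in Section~\ref{sec:families} (Lemma~\ref{lem:diff}.\ref{idiff} for the resolvent and Proposition~\ref{prop:diff}.\ref{prodiff} for compositions), observes that the two outer terms of the product rule cancel because $(A(t)-i)^{-1}$ commutes with $iA(t)$, and then inverts back to $\hat A(t)$ via Lemma~\ref{lem:diff}.\ref{idiff}. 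Your argument instead confronts the formal cancellation $-iA\cdot A+A\cdot iA=0$ head\-on, making it rigorous on the sesquilinear\-form level: you differentiate $\langle v(r),A(r)u(r)\rangle$ with $u,v$ solutions of the evolution equation, use self\-adjointness to shuffle one $A(r_0)$ onto the $W$\-valued test vector, and observe that the two quadratic terms $\pm i\langle A(r_0)v(r_0),A(r_0)u(r_0)\rangle$ cancel. You then integrate and upgrade from a weak identity to a Bochner\-integral identity by density of $W$. The two proofs exploit the same algebraic phenomenon (commutativity of $A(t)$ with its own functional calculus) in different guises; yours trades the paper's reliance on the abstract machinery of Section~\ref{sec:families} for a more hands\-on estimate, and makes the role of self\-adjointness of $A(t)$ explicit, whereas the paper uses it only implicitly (to guarantee that $(A(t)-i)^{-1}$ exists as a bounded map $\mH\to W$). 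Both are about the same length and equally valid.
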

\begin{proof}
Let $R(t):=(A(t)-i)^{-1}$ for $t\in [0,T]$. $\hat A(t)$ is differentiable at $t$ if and only if 
$$\hat A(t)-i=Q(0,t)(A(t)-i)Q(t,0)$$ 
is. As $Q(t,0)$ and $Q(0,t)$ are strongly continuously differentiable in $B(W,\mH)$ and $R(t)$ is strongly continuously differentiable in $B(\mH,W)$, we get from Lemma \ref{lem:diff}.\ref{idiff} and Proposition \ref{prop:diff}.\ref{prodiff} that 
$$(\hat A(t)-i)^{-1}=Q(0,t)R(t)Q(t,0)$$
is strongly differentiable in $B(W,\mH)$. Its derivative is
\begin{align*}
&\ddt (\hat A(t)-i)^{-1} \\
&=\ddt Q(0,t)R(t)Q(t,0)\\
&=Q(0,t)R(t)iA(t)Q(t,0)-Q(0,t)R(t)A'(t)R(t)Q(t,0)-Q(0,t)iA(t)R(t)Q(t,0)\\
&=-Q(0,t)R(t)A'(t)R(t)Q(t,0).
\end{align*}
As this is strongly continuous in $B(\mH,W)$, Proposition \ref{prop:diff}.\ref{strictdiff} implies that $(\hat A(t)-i)^{-1}$ is strongly continuously differentiable in $B(\mH,W)$. By Lemma \ref{lem:diff}.\ref{idiff}, $\hat A(t)-i$ and hence $\hat A(t)$ are strongly continuously differentiable, with derivative
\begin{align*}
\hat A'(t)&=\ddt \big((\hat A(t)-i)^{-1}\big)^{-1}\\
&=-(\hat A(t)-i)\left(\ddt(\hat A(t)-i)^{-1}\right)(\hat A(t)-i)\\
&=Q(0,t)A'(t)Q(t,0) . 
\qedhere 
\end{align*}
\end{proof}

We now have all the pieces in place to prove our main result.
\begin{thm}
\label{main}
If $(D|_{[0,t]})_{\APS}$ is Fredholm for all $t\in[0,T]$, we have
$$\ind(D_{\APS})=\sfl(A).$$
\end{thm}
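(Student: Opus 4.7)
The proof will essentially be a bookkeeping exercise combining the three main preceding results: Theorem \ref{inD} (with its remark), Theorem \ref{flowind}, and Lemma \ref{hatdiff}. The strategy is to funnel everything through the evolved family $\hat A(t) = Q(0,t) A(t) Q(t,0)$ and its spectral projections $\hat P_{<0}(t)$.

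First, I would invoke the remark after Theorem \ref{inD}: the Fredholm hypothesis on $(D|_{[0,t]})_{\APS}$ for every $t \in [0,T]$ translates directly into the statement that $(P_{<0}(0), \hat P_{<0}(t))$ is a Fredholm pair for every $t \in [0,T]$. Since $\hat P_{<0}(0) = P_{<0}(0)$, this is exactly the Fredholmness hypothesis needed in Theorem \ref{flowind} applied to the family $\hat A$, because the spectral projections of $\hat A(t)$ are precisely $\hat P_{<a}(t)$ (equivariance of functional calculus under conjugation by the unitary $Q(t,0)$). By Lemma \ref{hatdiff} together with Lemma \ref{lem:diff}.\ref{diffbound}, the family $\hat A$ is norm-continuous, so it satisfies Assumption \ref{set_sf}, and Theorem \ref{flowind} yields
\[
\sfl(\hat A) = \ind(\hat P_{<0}(0), \hat P_{<0}(T)) = \ind(P_{<0}(0), \hat P_{<0}(T)).
\]

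Next, I would show that $\sfl(\hat A) = \sfl(A)$. Since $Q(t,0)$ is unitary, $\hat A(t)$ and $A(t)$ are unitarily equivalent, so $\spec(\hat A(t)) = \spec(A(t))$ and, for any interval $I \subset \R$, $\Dim \mH_I(\hat A(t)) = \Dim \mH_I(A(t))$. Consequently any flow partition $(t_n), (a_n)$ for $A$ is also a flow partition for $\hat A$, and plugging into the definition of spectral flow gives identical values. (This is the explicit unitary-invariance noted right after the definition of $\sfl$ in Section \ref{sec:sf}, applied pointwise.)

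Finally, I would combine the pieces. Theorem \ref{inD} at $t = T$ gives $\ind(D_{\APS}) = \ind(P_{<0}(0), \hat P_{<0}(T))$, so chaining:
\[
\ind(D_{\APS}) = \ind(P_{<0}(0), \hat P_{<0}(T)) = \sfl(\hat A) = \sfl(A).
\]

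I do not expect a serious obstacle: every ingredient has been prepared. The only point that warrants a brief verification is the compatibility between the Fredholm-pair hypothesis of Theorem \ref{flowind} (which is phrased in terms of the spectral projections of the given family) and the Fredholm-pair conclusion of Theorem \ref{inD} (phrased in terms of the evolved projections); this is resolved by the observation $\chi_{(-\infty, 0)}(\hat A(t)) = \hat P_{<0}(t)$ together with $\hat P_{<0}(0) = P_{<0}(0)$.
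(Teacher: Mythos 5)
Your proposal is correct and follows essentially the same route as the paper's own proof: invoke Lemma \ref{hatdiff} to put $\hat A$ under Assumption \ref{set_sf}, use the remark after Theorem \ref{inD} to convert the Fredholmness of $(D|_{[0,t]})_{\APS}$ into the Fredholm-pair hypothesis of Theorem \ref{flowind} for $\hat A$, and then chain $\ind(D_{\APS}) = \ind(P_{<0}(0), \hat P_{<0}(T)) = \sfl(\hat A) = \sfl(A)$ via unitary invariance of spectral flow. The only cosmetic difference is that you spell out the flow-partition argument for $\sfl(\hat A) = \sfl(A)$ where the paper simply cites the invariance remark stated after the definition.
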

\begin{proof}
From Lemma \ref{hatdiff} we know that $\hat A$ satisfies Assumption \ref{set}. In particular, $\hat A$ is norm-continuous by Lemma \ref{lem:diff}.\ref{diffbound}, so we may apply  Theorem \ref{flowind}. The spectral projections of $\hat A$ are given by
$$\chi_{(-\infty, 0)}(\hat A(t))=\hat P_{<0}(t).$$
Using Theorem \ref{inD}, we know that $(\hat P_{<0}(0),\hat P_{<0}(t))$ is a Fredholm pair for all $t\in[0,T]$. 
Thus we obtain 
\begin{align*}
\ind (D_{\APS}) 
&\stackrel{\ref{inD}}{=}\ind(P_{<0}(0),\hat P_{<0}(T))
=\ind(\hat P_{<0}(0),\hat P_{<0}(T))
\stackrel{\ref{flowind}}{=}\sfl(\hat A)
=\sfl(A),
\end{align*}
where in the last step we used that the spectral flow is invariant under unitary conjugation. 
\end{proof}

\begin{example}
Consider the Lorentzian Dirac operator on a globally hyperbolic spacetime $M = \Sigma\times\R$, as studied in \cite{BS19} (and as described in the Introduction). 
It is shown in \cite[Lemma 2.6]{BS19}, using methods of Fourier integral operators, that the operator 
$$Q_{--}(t,0):=P_{<0}(t)Q(t,0)\big|_{{\mH_{<0}(0)}\rightarrow{\mH_{<0}(t)}}$$
is Fredholm for each $t\in[0,T]$. Since $\hat P_{<0}(t)_r = Q(0,t)Q_{--}(t,0)$ and $Q(0,t)$ is an invertible map between the codomains, it then follows that $\hat P_{<0}(t)_r$ is also Fredholm (and has the same index) for each $t\in T$. 
Using Theorems \ref{inD} and \ref{main}, we thus recover the equality\footnote{In \cite[\S4.1]{BS19}, there is actually an additional summand on the right hand side coming from the kernel of $A(T)$, due to a slightly different choice of boundary conditions.} $\ind(D_{\APS})=\sfl(A)$ from \cite[\S4.1]{BS19}.
\end{example}

It may be difficult to determine a priori whether $(D|_{[0,t]})_{\APS}$ is Fredholm for all $t\in[0,T]$. The following result provides a sufficient condition. 

\begin{prop}
If $A'(t)$ is compact in $B(W,\mH)$ for all $t\in[0,T]$ (i.e., it is relatively compact with respect to $A(0)$), then $D_{\APS}$ is Fredholm and
$$\ind(D_{\APS})=\sfl(A).$$
\end{prop}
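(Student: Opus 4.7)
My plan is to verify the hypothesis of Theorem \ref{main}, namely that $(D|_{[0,t]})_{\APS}$ is Fredholm for each $t\in[0,T]$, and then invoke that theorem to conclude $\ind(D_{\APS})=\sfl(A)$. By Theorem \ref{inD}, it suffices to show that $(P_{<0}(0),\hat P_{<0}(t))$ is a Fredholm pair for every $t$, and a comfortable sufficient condition is that the difference $\hat P_{<0}(t)-P_{<0}(0)=P_{<0}(\hat A(t))-P_{<0}(\hat A(0))$ be compact on $\mH$.

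By \cite[Corollary 3.5]{Les05} (cited in Remark \ref{remark:flowind}), this compactness of spectral projections follows once $\hat A(t)-\hat A(0)$ is relatively compact with respect to $\hat A(0)=A(0)$, i.e.\ compact as an element of $B(W,\mH)$. Combining Lemma \ref{hatdiff} with the strong fundamental theorem of calculus, we have $\hat A(t)-A(0)=\int_0^t \hat A'(s)\,ds$ with $\hat A'(s)=Q(0,s)A'(s)Q(s,0)$; since $A'(s)$ is compact in $B(W,\mH)$ by hypothesis and the evolution operators are bounded on $W$ and $\mH$ (Theorem \ref{evop}), each $\hat A'(s)$ is itself compact in $B(W,\mH)$.

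The decisive step is showing that this integral of a strongly continuous, pointwise-compact family is again compact in $B(W,\mH)$. I would do this via the standard characterisation: since $W$ is Hilbert (its graph norm makes it so), an operator $T\in B(W,\mH)$ is compact iff $w_n\rightharpoonup w$ weakly in $W$ implies $Tw_n\to Tw$ in norm in $\mH$. Apply this to $T=\hat A(t)-A(0)$: for $w_n\rightharpoonup w$, compactness of each $\hat A'(s)$ gives $\|\hat A'(s)(w_n-w)\|_{\mH}\to 0$ pointwise in $s$, while Banach--Steinhaus supplies a uniform bound $\|\hat A'(s)\|_{B(W,\mH)}\leq M$, so the integrands are dominated by the constant $M\sup_n\|w_n-w\|_W$. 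Dominated convergence then yields $\|(\hat A(t)-A(0))(w_n-w)\|_{\mH}\leq \int_0^t\|\hat A'(s)(w_n-w)\|_{\mH}\,ds\to 0$, establishing the required compactness; chasing back through Lesch, Theorem \ref{inD}, and Theorem \ref{main} then gives $\ind(D_{\APS})=\sfl(A)$. I expect the integration step to be the main obstacle, because strong continuity of a compact-operator-valued function does not automatically upgrade to norm continuity (which would make preservation of compactness under Bochner integration trivial); it is precisely the weak-to-norm promotion provided by each compact $\hat A'(s)$ that bridges this gap.
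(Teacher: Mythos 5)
Your proposal is correct and follows the paper's own route essentially verbatim: use Lemma \ref{hatdiff} to express $\hat A'(t)=Q(0,t)A'(t)Q(t,0)$, observe it is compact in $B(W,\mH)$, integrate to conclude $\hat A(t)-A(0)$ is compact, invoke \cite[Corollary 3.5]{Les05} to get compactness of $\hat P_{<0}(t)-P_{<0}(0)$, and then chain Theorems \ref{inD} and \ref{main}. The one place you add genuine content is in justifying that a Bochner integral of a merely strongly continuous, pointwise-compact family stays compact (the paper simply asserts ``this implies''); your weak-to-norm/dominated-convergence argument is a correct and worthwhile way to close that gap, since strong continuity alone does not give norm continuity of $\hat A'$ and one cannot appeal to norm-closedness of the compacts.
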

\begin{proof}
By Lemma \ref{hatdiff}, $\hat A'$ is compact as well. This implies that $\hat A(t)- A(0)$ is compact in $B(W,\mH)$ for every $t\in[0,T]$.
From \cite[Corollary 3.5]{Les05}, it follows that
$\hat P_{<0}(t)-P_{<0}(0)$
is compact, so $(P_{<0}(0),\hat P_{<0}(t))$ is a Fredholm pair.
By Theorems \ref{inD} and \ref{main}, we get the desired result.
\end{proof}
\begin{remark}
The counterexample in the next section shows that it is not sufficient to ask for relative compactness of $A(t)-A(0)$.
\end{remark}

\subsection{A counterexample with bounded perturbation}
\label{sec:counterexample}

In this section an example is given to illustrate that $D_{\APS}$ will not always be Fredholm. There might be ``infinite exchange'' between the positive and the negative spectral subspace. This is possible, even if $A(t)$ has only discrete spectrum and its difference from $A(0)$ is bounded. 
The idea is to choose a bounded perturbation $A(t) = A(0) + B(t)$ such that the corresponding evolution operator $Q(T,0)$ interchanges the positive and negative eigenspaces of $A(0)$ and $A(T)$. 
The first step is to show that such an exchange works in a two dimensional subspace, with suitable bounds on the derivative of the perturbation. These bounds will then allow us to pass to an infinite direct sum, in which all positive and negative eigenspaces are interchanged. This means that $\Ker(D_{\APS})\cong\mH_{<0}(0)\cap Q(0,T)\mH_{\geq 0}(T)$ will be infinite-dimensional, whence $D_{\APS}$ is not Fredholm.

\begin{lemma}
\label{swap}
There exists a positive number $c>0$, such that for any
$$a=\mat{\lambda_1}{0}{0}{\lambda_2},$$
with $\lambda_1,\lambda_2\in\R$, there is a smooth family $(b(t))_{t\in[0,1]}$ of self-adjoint operators on $\C^2$ such that for $\lambda:=|\lambda_1-\lambda_2|+1$ we have
\begin{align*}
\|b(t)\|&\leq 2, & 
\|b'(t)\|&\leq c \lambda , & 
b(0)&=b(1)=0 , & 
q(1,0)e_1 &\in \spann(e_2) . 
\end{align*}
where $q$ is the evolution operator associated with $a+b(t)$, and $e_i$ denotes the $i^{\text{th}}$ standard unit vector.
\end{lemma}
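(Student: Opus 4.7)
The plan is to construct $b(t)$ explicitly by working in the ``interaction picture'' with respect to the free evolution generated by $a$, choosing the perturbation in that rotating frame so that the resulting evolution is easy to compute in closed form. The starting observation is that we can reduce to the traceless case: if we replace $a$ by $\bar a := a - \tfrac{\lambda_1+\lambda_2}{2}\Id = \tfrac{\mu}{2}\sigma_z$ (where $\mu := \lambda_1 - \lambda_2$ and $\sigma_z = \text{diag}(1,-1)$), then the evolution operator associated with $a + b(t)$ differs from that associated with $\bar a + b(t)$ only by the scalar factor $e^{i(\lambda_1+\lambda_2)t/2}$, so the property $q(1,0)e_1 \in \spann(e_2)$ is unaffected. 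Since $b(t)$ does not enter this reduction, we may work with $\bar a$ throughout, keeping $\|\bar a\| = |\mu|/2 \leq \lambda/2$.

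Now set $u(t) := e^{i\bar a t} = \text{diag}(e^{i\mu t/2}, e^{-i\mu t/2})$ and write $q(t,0) = u(t)v(t)$. Then $v$ satisfies $v'(t) = i\tilde b(t)v(t)$ with $\tilde b(t) := u(t)^{-1}b(t)u(t)$, and since $u(1)$ is diagonal, the target condition $q(1,0)e_1 \in \spann(e_2)$ is equivalent to $v(1)e_1 \in \spann(e_2)$. I would choose
\[
\tilde b(t) := \tfrac{\pi}{2}\phi(t)\sigma_x, \qquad \sigma_x = \begin{pmatrix} 0 & 1 \\ 1 & 0 \end{pmatrix},
\]
where $\phi\in C^\infty([0,1],\R)$ is a fixed bump function satisfying $\phi(0)=\phi(1)=0$, $\int_0^1\phi\,dt = 1$, and $\|\phi\|_\infty \leq 4/\pi$. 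Since the matrices $\tilde b(t_1)$ and $\tilde b(t_2)$ commute (they are scalar multiples of $\sigma_x$) and $\sigma_x^2 = \Id$, the evolution integrates explicitly to
\[
v(1) = \exp\!\left(i\tfrac{\pi}{2}\sigma_x\!\int_0^1\!\phi(s)ds\right) = \exp(i\pi\sigma_x/2) = i\sigma_x,
\]
so that $v(1)e_1 = ie_2 \in \spann(e_2)$, as required.

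Transforming back, an explicit calculation gives
\[
b(t) = u(t)\tilde b(t)u(t)^{-1} = \tfrac{\pi}{2}\phi(t)\begin{pmatrix} 0 & e^{i\mu t} \\ e^{-i\mu t} & 0 \end{pmatrix},
\]
which is manifestly smooth, self-adjoint, and vanishes at $t=0$ and $t=1$. The norm bound $\|b(t)\| \leq \tfrac{\pi}{2}\|\phi\|_\infty \leq 2$ is immediate. For the derivative bound, differentiating the explicit formula gives two terms, one involving $\phi'(t)$ with norm $\leq \tfrac{\pi}{2}\|\phi'\|_\infty$ (a universal constant), and one of the form $i\mu\tfrac{\pi}{2}\phi(t)\,u(t)\sigma_y u(t)^{-1}$ (up to sign conventions) with norm $\leq \tfrac{\pi}{2}|\mu|\|\phi\|_\infty$. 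Using $|\mu| \leq \lambda$, one obtains $\|b'(t)\| \leq c\lambda$ for a universal constant $c$ depending only on $\phi$.

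The main obstacle is the $\|b'\| \leq c\lambda$ bound rather than the mere existence of a swapping perturbation. A generic smooth $b$ implementing the swap will have $\|b'\|$ controlled only by $\|a\|$, which can be arbitrary. The key point is that $b$ is chosen so that its off-diagonal entries carry exactly the phase $e^{\pm i\mu t}$ of the free evolution $u(t)$; this is precisely what keeps $\tilde b$ time-independent-in-direction and collapses the evolution to an elementary exponential, while at the same time ensuring that differentiating $b$ produces only a factor $|\mu| \leq \lambda$ rather than $\max(|\lambda_1|,|\lambda_2|)$.
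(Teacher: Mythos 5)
Your proof is correct and uses essentially the same construction as the paper: a bump-function-modulated off-diagonal perturbation whose phase $e^{\pm i\mu t}$ cancels the relative phase of the free evolution, producing a $\pi/2$ rotation with norm and derivative bounds controlled only by $\|\phi\|_\infty$, $\|\phi'\|_\infty$, and $|\mu|\le\lambda$. The only differences are cosmetic: you derive $b(t)$ systematically by passing to the interaction picture and using a $\sigma_x$ generator (with $\int_0^1\phi=1$), whereas the paper writes down $b(t)$ and $q(t,0)$ directly, using a $\sigma_y$-type generator parametrized by the accumulated angle $\phi(t)$ with $\phi(1)=\pi/2$, and verifies the evolution equation by hand.
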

\begin{proof}
Let $\phi\colon [0,1]\rightarrow [0,\frac{\pi}{2}]$ be a smooth function (chosen independently of the $\lambda_i$) satisfying  
\begin{align*}
|\phi'(t)|&\leq 2 , & 
\phi(0)&=0 , & 
\phi(1)&=\frac{\pi}{2} , & 
\phi'(0)&=\phi'(1)=0 . 
\end{align*}
Consider the self-adjoint family 
$$b(t):=\mat{0}{i\phi'(t)\exp(i(\lambda_1-\lambda_2)t)}{-i\phi'(t)\exp(i(\lambda_2-\lambda_1)t)}{0}.$$
Then the evolution operator of $a+b(t)$ is given by
$$q(t,0):=\mat{\exp(i\lambda_1t)\cos(\phi(t))}{-\exp(i\lambda_1t)\sin(\phi(t))}{\exp(i\lambda_2t)\sin(\phi(t))}{\exp(i\lambda_2t)\cos(\phi(t))}.$$
Indeed, a straightforward calculation shows that $q(0,0)=\Id$ and 
$$\ddt q(t,0)=i(a+b(t))q(t,0).$$
The required properties for $b$ are easily checked, and the requirement $q(1,0)e_1 \in \spann(e_2)$ follows since $q(1,0)$ is off-diagonal.
\end{proof}

\begin{prop}
Let $\mH:=\bigoplus\limits_{i=0}^\infty\C^2$ and let $(\lambda_i)_{i\geq0}$ be an unbounded increasing sequence of positive real numbers.
Consider the unbounded self-adjoint operator (with compact resolvents) given by 
\begin{align*}
A_0 &:= \bigoplus\limits_{i=0}^\infty a_i , &
a_i &= \mat{-\lambda_i}{0}{0}{\lambda_i} .
\end{align*}
There is a bounded family $B\colon [0,1]\rightarrow B(\mH)$ such that $A(t):=A_0+B(t)$ satisfies Assumption \ref{set} and such that $D_{\APS}$ is not Fredholm.
\end{prop}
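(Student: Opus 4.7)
The plan is to build $B(t)$ as a block-diagonal sum of the two-dimensional perturbations provided by Lemma \ref{swap}, so that in each $\C^2$-block the evolution operator swaps the negative eigenspace of $A_0$ into the positive eigenspace, producing an infinite-dimensional kernel of $D_{\APS}$.

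First, for each $i\geq 0$ I would apply Lemma \ref{swap} to $a_i$ to obtain a smooth family $b_i(t)$ on $\C^2$ with $b_i(0)=b_i(1)=0$, $\|b_i(t)\|\leq 2$, $\|b_i'(t)\|\leq c(2\lambda_i+1)$, and $q_i(1,0)e_1\in\spann(e_2)$, where $q_i$ denotes the evolution operator of $a_i+b_i(t)$. I then set $B(t):=\bigoplus_{i=0}^\infty b_i(t)$. The uniform bound $\|b_i(t)\|\leq 2$ makes $B(t)\in B(\mH)$ with $\|B(t)\|\leq 2$, and $B(0)=B(1)=0$.

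Second, I would verify Assumption \ref{set} for $A(t):=A_0+B(t)$. Since $B(t)$ is bounded, the domain is constantly $W=\Dom(A_0)$. For strong continuous differentiability in $B(W,\mH)$, the formal derivative is $B'(t):=\bigoplus b_i'(t)$. Although $\|B'(t)\|_{B(\mH)}$ may blow up in $i$, the block bound $\|b_i'(t)\|\leq c(2\lambda_i+1)$ together with $\|A_0 x\|^2=\sum \lambda_i^2\|x_i\|^2$ yields an estimate $\|B'(t)x\|_\mH\leq C\|x\|_W$, so $B'(t)\in B(W,\mH)$ uniformly in $t$. Strong differentiability of $A(t)$ in $B(W,\mH)$ then follows from the componentwise smoothness of the $b_i(t)$ via dominated convergence applied to the difference quotients (using the $\lambda_i^{-1}$-decay of the block components of any fixed $x\in W$); strong continuity of $B'(t)$ in $B(W,\mH)$ is verified analogously.

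Third, because $A(t)$ preserves the block decomposition, so does the evolution operator; using the uniqueness in Theorem \ref{thm:Cauchy} one checks $Q(1,0)=\bigoplus_i q_i(1,0)$. Since $B(0)=B(1)=0$ we have $A(0)=A(1)=A_0$, so $\mH_{<0}(0)=\mH_{<0}(1)$ is spanned by the vectors $e_1^{(i)}$ of each block and $\mH_{\geq 0}(1)$ is spanned by the $e_2^{(i)}$. The property $q_i(1,0)e_1^{(i)}\in\spann(e_2^{(i)})$ therefore supplies infinitely many linearly independent vectors in $\mH_{<0}(0)\cap Q(0,1)\mH_{\geq 0}(1)$. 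By the kernel computation in the proof of Theorem \ref{inD}, this subspace is isomorphic to $\Ker(D_{\APS})$, which is consequently infinite-dimensional; in particular $D_{\APS}$ is not Fredholm.

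The main obstacle I expect is the second step: verifying that $B(t)$ meets the regularity required by Assumption \ref{set}, namely that the unboundedness of $B'(t)$ on $\mH$ is precisely compensated by the $\lambda_i^{-1}$-decay encoded in the $W$-norm, so that $B'(t)$ is uniformly bounded in $B(W,\mH)$ and yields a strongly continuous derivative there; once this regularity is in hand, the block-diagonal construction does the work automatically.
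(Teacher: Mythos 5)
Your proposal is correct and follows essentially the same approach as the paper's proof sketch: take $B(t)$ to be the block-diagonal sum of the Lemma \ref{swap} perturbations $b_i(t)$, observe that the evolution operator factors blockwise as $Q(1,0)=\bigoplus_i q_i(1,0)$, and conclude via Theorem \ref{inD} that $\Ker(D_{\APS})\cong\mH_{<0}(0)\cap Q(0,1)\mH_{\geq 0}(1)$ contains all the $\iota_i(e_1)$ and is thus infinite-dimensional. Your second step, showing that the $O(\lambda_i)$ growth of $\|b_i'(t)\|$ is absorbed by the decay of the blocks of any $x\in W$ so that $B'(t)\in B(W,\mH)$ uniformly and the family is strongly continuously differentiable in $B(W,\mH)$, is exactly the regularity verification the paper leaves implicit in its proof sketch.
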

\begin{proof}[Proof sketch]
For $i\geq0$, let $b_i$ and $q_i$ be chosen as $b$ and $q$ in Theorem $\ref{swap}$ with $\lambda_1=-\lambda_i$ and $\lambda_2=\lambda_i$. Define 
\begin{align*}
B &:= \bigoplus\limits_{i=0}^\infty b_i , &
Q &:= \bigoplus\limits_{i=0}^\infty q_i. 
\end{align*}
$Q$ is the evolution operator associated to the family $A(t):=A_0+B(t)$.
Let $\iota_i$ denote the inclusion of the $i^{\text{th}}$ summand $\C^2\into\mH$. 
For all $i\in\N$, $\iota_i(e_1)$ is a negative eigenvector of $A(0)=A_0$, but 
$$Q(1,0)\iota_i(e_1)=\iota_i(q_i(1,0)e_1)\in \spann(\iota_i(e_2))$$
is a positive eigenvector of $A(1)=A_0$ by construction. Thus 
$$\Ker(D_{\APS})\cong \Ker(\hat P_{<0}(1)_r)=  \mH_{<0}(0)\cap Q(0,1)\mH_{\geq 0}(1)=\overline{\{\spann\{\iota_i(e_1)|i\in\N\}}$$
is infinite-dimensional and hence $D_{\APS}$ is not Fredholm.
\end{proof}


\providecommand{\bysame}{\leavevmode\hbox to3em{\hrulefill}\thinspace}
\providecommand{\MR}{\relax\ifhmode\unskip\space\fi MR }
\providecommand{\MRhref}[2]{%
  \href{http://www.ams.org/mathscinet-getitem?mr=#1}{#2}
}
\providecommand{\href}[2]{#2}
\providecommand{\doilinktitle}[2]{#1}
\providecommand{\doilinkjournal}[2]{\href{https://doi.org/#2}{#1}}
\providecommand{\doilinkvynp}[2]{\href{https://doi.org/#2}{#1}}
\providecommand{\eprint}[2]{#1:\href{https://arxiv.org/abs/#2}{#2}}

\end{document}